\newtheorem{lemma}{Lemma}[section]
\newtheorem{theorem}[lemma]{Theorem}
\newtheorem{corollary}[lemma]{Corollary}
\newtheorem{proposition}[lemma]{Proposition}
\DeclareMathOperator{\E}{\mathbb{E}}
\DeclareMathOperator{\prob}{\mathbb{P}}
\DeclareMathOperator{\Cov}{Cov}
\DeclareMathOperator{\1}{\mathds{1}}
\newcommand{\R}{\mathbb{R}}
\newcommand{\N}{\mathbb{N}}
\newcommand{\U}{\mathcal{U}}
\newcommand{\F}{\mathcal{F}}
\begin{document}

\title{A Non-Markovian Approach to a Stochastic Rumor Dynamics with Cognitive Deliberation}

\author[1]{Cristian F. Coletti\thanks{cristian.coletti@ufabc.edu.br}}
\author[2]{Denis A. Luiz\thanks{denisalu@ime.unicamp.br}}

\affil[1]{Centro de Matemática, Computação e Cognição, Universidade Federal do ABC, Av. dos Estados, 5001, Santo André, 09210-580, São Paulo, Brazil}
\affil[2]{Instituto de Matemática, Estatística e Computação Científica (IMECC), Unicamp, Rua Sérgio Buarque de Holanda, 651, Campinas, 13083-859, São Paulo, Brazil}

\date{} 

\maketitle

\begin{abstract}
We introduce a non-Markovian rumor model on a complete graph of $n$ vertices, integrating the classical interactional framework of Daley and Kendall (1964) with modern cognitive insights into misinformation. Unlike traditional Markovian models, our approach incorporates a deliberation delay -- a decision-making window where individuals evaluate information before committing to dissemination or refutation. We establish a Functional Law of Large Numbers (FLLN) and a Functional Central Limit Theorem (FCLT) to characterize the asymptotic behavior and diffusion-scaled fluctuations of the process.

\vspace{0.5cm}
\noindent\textbf{Keywords:} Rumor model, Non-Markovian process, Functional Law of Large Numbers, Functional Central Limit Theorem, Stochastic Volterra Integral Equation
\end{abstract}

\section{Introduction}
The phenomenon of rumor dissemination is a contemporary problem emerging in the context of applied social sciences, with implications for various topics such as politics, economics, and public health \cite{financialrumors,vaccinerumors,electoralrumor}. The first models used to better understand the dissemination of rumors were epidemiological models such as the SIR model, which considers the individuals in a population as susceptible, infected, or recovered \cite{kermack1927contribution}. In epidemiological analogies, while two infected individuals do not interact, two rumor spreaders affect the dissemination of information whenever they meet.

In 1964, Daley and Kendall \cite{daley1964epidemics} introduced what is commonly referred to as the first mathematical model of rumor spreading, the DK model. In this model, the population of $n+1$ individuals is divided into three disjoint classes: \textit{ignorants} (unaware of the rumor), \textit{spreaders} (actively propagating the rumor) and \textit{stiflers} (aware of the rumor but ceasing its dissemination). Denote the number of ignorants, spreaders and stiflers at time $t$ by $X^n(t),Y^n(t)$ and $Z^n(t)$, respectively, and assume that $X^n(t)+Y^n(t)+Z^n(t)=n+1$. The process $\{(X^n(t),Y^n(t))\}_{t\geq0}$ is a continuous-time Markov Chain (CTMC) with transitions and associated rates specified by
\begin{equation*}
    \begin{array}{cc}
         \text{transition}&\text{rate}  \\
         (-1,1)&X^nY^n\\
         (0,-2)&\binom{Y^n}{2}\\
         (0,-1)&Y^n(n+1-X^n-Y^n).
    \end{array}
\end{equation*}

Another model which became classic on the literature is that of Maki and Thompson's work of 1973, the so-called MT model \cite{MR0366359}. In this model, there is only one way for a spreader individual to become a stifler, although asymptotically it behaves as the DK model. They found that the proportion of individuals never hearing the rumor converges to approximately $0.238$. Sudbury (1985) \cite{sudbury1985proportion} proved that it actually converges to approximately $0.203$. Subsequent variations have extended these frameworks to complex social networks and introduced additional classes such as uninterested individuals or intermediate stages between awareness and dissemination, often analyzed through deterministic differential systems \cite{jie2025dynamic,lebensztayn2011limit,moreno2004dynamics,rada2021role,xia2023dynamic}.

While these iterations have expanded the model's scope, they remain largely within the Markovian paradigm. However, empirical reviews of misinformation psychology, notably Pennycook and Rand (2021), suggest that "truth discernment" is not an instantaneous event but a consequence of analytical reasoning \cite{pennycook2021psychology}. We posit that rumor spreading is not a memoryless process; instead, individuals who are aware of the rumor undergo a period of deliberation.

The fundamental mechanism of our framework is the \textbf{deliberation delay}, which we justify through the \textit{Two-Response Paradigm} established in cognitive psychology \cite{bago2020advancing}. This paradigm is rooted in dual-process theories of cognition, which distinguish `System 1' (fast, intuitive processes) and `System 2' (slower, more analytical processes) \cite{bago2020fake}. In this experimental framework, participants provide a first, immediate response to a stimulus, followed by a second response after being granted time to deliberate. Research demonstrates that while initial responses are often prone to the acceptance of misinformation, the second response -- facilitated by the engagement of analytical 'System 2' reasoning -- frequently leads to improved truth discernment. By modeling the transition from awareness as a non-Markovian process, we capture the temporal window required for this cognitive evaluation, where the probability of an individual refuting a rumor increases with the duration of their deliberation.

Furthermore, we introduce a \textbf{Contestant} class to represent individuals who engage in what is described as 'active debunking' or peer-to-peer correction \cite{ecker2022psychological}. Unlike 'stiflers' who merely cease dissemination, contestants play an active role in converting spreaders through interaction, a dynamic that aligns with recent strategies in optimal rumor control. In our model, spreaders may be convinced by contestants to become contestants themselves, reflecting the social pressure of corrective information.

The aim of this paper is to propose a Non-Markovian rumor model on the complete graph and show a Functional Law of Large Numbers (FLLN) and a Functional Central Limit Theorem (FCLT) for this process. To the best of our knowledge, this is the first time that a non-Markovian rumor model has been studied rigorously.

\subsection{Related Work on Non-Markovian Processes}

Reinert \cite{reinert1995asymptotic} proposed a non-Markovian generalization of Sellke's epidemic model \cite{sellke1983asymptotic} in 1995. Using the Stein method, she characterized the deterministic limit of the sequence of empirical measures. In contrast, we take a different approach to establish our results.

In 2018, Gao and Zhu \cite{gao2018functional} proved a Functional Central Limit Theorem for stationary Hawkes processes. Their method, based on stochastic intensity, relies on the assumption of stationarity, which is necessary to apply Hahn's theorem \cite{hahn1978central} to their sequence of processes. However, this assumption does not hold in our setting, making their technique unsuitable for our analysis. 

In 2020, Pang and Pardoux \cite{Pardoux} established the functional law of large numbers and the functional central limit theorem for certain non-Markovian epidemic models. For a more comprehensive overview of their construction, we refer the reader to \cite{forien2021recent}. We follow their approach with some modifications to the proofs, specifically regarding the properties of stochastic intensity. We refer the reader to \cite{Bremaud1981} for a concise exposition on this topic. 

\subsection{Organization of the Paper} The paper is organized as follows. In Section \ref{Model}, we formally define the model and state the main results. We give the proof of the main results in Section \ref{Proofs}. Subsection \ref{preliminaries} is used to prove structural results for the class of counting processes we use. In subsection \ref{secFLLN}, we show the FLLN; and subsection \ref{secFCLT} is devoted to the proof of the FCLT.

\subsection{Notation} 
For the rest of this paper, $\N$ denotes the set of positive integers, $\N_0=\N\cup\{0\}$ and $\R_+$ the set of non-negative real numbers. The indicator function will be denoted by $\1$. We denote by $D([0,T],\R^k)$ the space of right-continuous with left limits (càdlàg) functions $f:[0,T]\to\R^k$, and by $D^k$ the space of càdlàg functions with domain in $\R_+$ under the convention that $D^1=D$. We say a sequence of processes $(V^n)_{n\in\N}=((V^n(t))_{t\geq0})_{n\in\N}$ in $D^k$ converges in probability to a process $V=(V(t))_{t\geq0}\in D^k$ if for all $\varepsilon>0$, $\lim_{n\to\infty}\prob(\Vert V^n-V\Vert>\varepsilon)=0$. When $(V^n)_{n\in\N}$ converges in distribution to $V$ we denote it by $V^n\Rightarrow V$. For a sequence of processes $(V^n)$, define the fluid-scaled process by $\bar{V}^n:=n^{-1}V^n$. For a process $V$, write $[V]$ for its quadratic variation. All these definitions are given in the textbooks \cite{Bill} and \cite{Kurtz}.

\section{Model and main results}\label{Model}
\subsection{Definition of the Model}

Consider a closed, homogeneously mixed population with $n$ individuals and let the dynamics be as follows: an individual in the population can be in any of the four classes: ignorant, aware, spreader or contestant. Denote by $X^n(t)$, $W^n(t)$, $Y^n(t)$, and $Z^n(t)$ the ignorant, aware, spreader, and contestant individuals at time $t\geq 0$, respectively. 

Let $\lambda\in\R_+$ and $G:\R_+\to[0,1]$ be an increasing function. The rumor is driven by the following dynamics: individuals meet according to a Poisson Process of rate $\lambda$. An ignorant individual meets a spreader and becomes aware. After an $F$-distributed epoch $\eta_i\in\R_+$, the $i$-th aware individual takes a side: with probability $G(\eta_i)$ becomes contestant, otherwise becomes spreader. A spreader individual meets a contestant one and is convinced to be a contestant.  

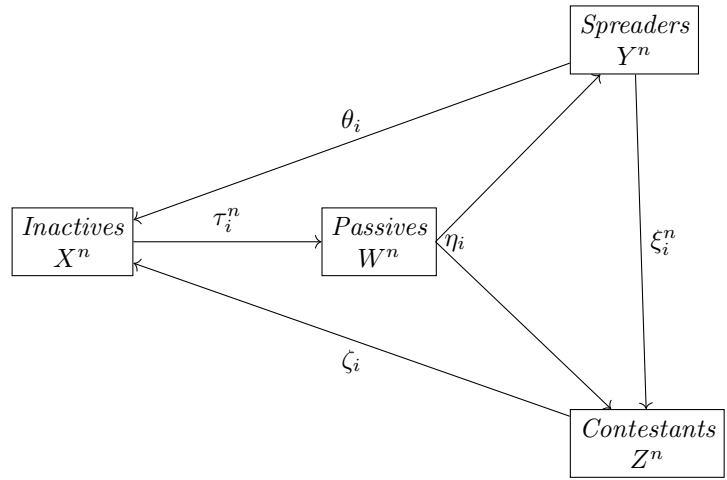
\begin{figure}[h]\label{dynamics}
    \centering
    \begin{tikzpicture}[node distance=2.5cm, every node/.style={circle, minimum size=1.5em, align=center}]
    \node[draw, minimum height=1.5em, minimum width=3em] (Xn) {$X^n$};
    \node[draw, minimum height=1.5em, minimum width=3em, right=of Xn] (Wn) {$W^n$};
    \node[draw, minimum height=1.5em, minimum width=3em, above right=of Wn] (Yn) {$Y^n$};
    \node[draw, minimum height=1.5em, minimum width=3em, below right=of Wn] (Zn) {$Z^n$};

    \draw[->] (Xn) -- (Wn) node[midway, above] {$\tau_i^n$};
    \draw[->] (Wn)+(0.58,0) -- (Yn) node[pos=0.2, below] {$\eta_i$};
    \draw[->] (Wn)+(0.58,0) -- (Zn);
    \draw[->] (Yn) -- (Zn) node[midway, right] {$\xi_i^n$};
\end{tikzpicture}
    \caption{Diagram representation of the model. The arrows indicate possible transitions between two states and the random variables 
    denote the time that an individual takes to change its state.}
\end{figure}

Let $(\U_j:j\in\N)$ and $(\U_i^0:i\in\N)$ be two families of independent uniform random variables on $[0,1]$ and $\eta_j^0$ be the $F$-distributed random variable analogous to $\eta_i$ for initially aware individuals.

The dissemination of the rumor is driven by the contacts of spreader individuals with ignorant ones but is receded by the contacts of spreader individuals with contestant ones. Let $A^n(t)$ be the process that counts every time occurs an encounter between ignorant and spreader until time $t$ and $B^n(t)$ be the process that counts every time occurs an encounter between spreader and contestant until time $t$. Let $\F^n=(\F_t^n)_{t\geq0}$ be the filtration with
\begin{equation*}
    \F_t^n:=\sigma\{X^n(s),W^n(s),Y^n(s),Z^n(s):0\leq s\leq t\}
\end{equation*}
\sloppy and suppose that the $\F_t^n$-stochastic intensities of $A^n(t)$ and $B^n(t)$ are $\lambda X^n(t-)Y^n(t-)/n$ and $\lambda Y^n(t-)Z^n(t-)/n$, respectively. Denote by $\tau_i^n$ and $\xi_i^n$ the $i$-th epochs of $A^n$ and $B^n$, respectively. 

Assume that $\{\eta_i^0:i\in\N\}$, $\{\eta_i:i\in\N\}$, $\{\U_i:i\in\N\}$ and $\{(W^n(0),Y^n(0),Z^n(0)):i\in\N\}$ are mutually independent.

Since the population is closed, then
\begin{equation*}
    X^n(t)=n-W^n(t)-Y^n(t)-Z^n(t).
\end{equation*}

In what follows, the first term takes account of aware individuals since the beginning of rumor. The second term represents aware individuals that came from the ignorant class
\begin{equation*}
    W^n(t)=\sum_{i=1}^{W^n(0)}\1(t<\eta_i^0)+\sum_{i=1}^{A^n(t)}\1(t<\tau_i^n+\eta_i).
\end{equation*}
The first term in the next equation represents those individuals who initially were aware and then became spreaders. The second term represents individuals  who initially were ignorant and became spreaders.
\begin{align*}
    Y^n(t)=&Y^n(0)+\sum_{j=1}^{W^n(0)}\1(\eta_j^0\leq t,\U^0_j> G(\eta_j^0))\\
    &+\sum_{i=1}^{A^n(t)}\1(\tau_i^n+\eta_i\leq t,\U_i> G(\eta_i))-B^n(t).
\end{align*}
Finally, the dynamic of contestant individuals is similar to the dynamic of spreaders.
\begin{align*}
    Z^n(t)=&Z^n(0)+\sum_{j=1}^{W^n(0)}\1(\eta_j^0\leq t,\U^0_j\leq G(\eta_j^0))\\&+\sum_{i=1}^{A^n(t)}\1(\tau_i^n+\eta_i\leq t,\U_i\leq G(\eta_j^0))+B^{n}(t).
\end{align*}

\subsection{Limit proportions}\label{Results}
Our purpose is to investigate the limit behavior for this process as the population size is big enough. Next we define what we call Condition $I$.

\noindent {\bf{Condition I}} As $n \rightarrow + \infty$,
\begin{equation*}
    (\bar{W}^n(0),\bar{Y}^n(0),\bar{Z}^n(0))\to(\bar{W}(0),\bar{Y}(0),\bar{Z}(0))
\end{equation*} 
in probability. Here $\bar{W}(0),\bar{Y}(0)$ and $\bar{Z}(0)$ are positive constants such that $\bar{W}(0)+\bar{Y}(0)+\bar{Z}(0)<1$. As usual, $\bar{X}(0):=1-\bar{W}(0)-\bar{Y}(0)-\bar{Z}(0)$. 

Let
\begin{equation*}
    \beta(t):=\int_0^t G(s)F(ds)
\end{equation*}
and denote by $F^c(t)=1-F(t)$.

\begin{theorem}[FLLN]\label{FLLN}
Assume that Condition I holds. Then
\begin{equation*}
    \lim_{n\to\infty}(\bar{X}^n,\bar{W}^n,\bar{Y}^n,\bar{Z}^n)=(\bar{X},\bar{W},\bar{Y},\bar{Z})\quad\text{in }D^4,
\end{equation*}
in probability, where $(\bar{X},\bar{W},\bar{Y},\bar{Z})$ is the solution of the system of deterministic equations
\begin{align}
&\bar{X}(t)=1-\bar{W}(t)-\bar{Y}(t)-\bar{Z}(t),
\label{Xbar} 
 \\
&\bar{W}(t)=\bar{W}(0)F^c(t)+\lambda\int_0^tF^c(t-s)\bar{X}(s)\bar{Y}(s)ds,
\label{Wbar} 
 \\
 &\begin{aligned}\label{Ybar} 
\bar{Y}(t)=&\,\bar{Y}(0)+(F(t)-\beta(t))\bar{W}(0)\\
&+\lambda\int_0^t (F(t-s)-\beta(t-s))\bar{X}(s)\bar{Y}(s)ds-\lambda\int_0^t\bar{Y}(s)\bar{Z}(s)ds,
\end{aligned}
 \\
&\begin{aligned}\label{Zbar}
\bar{Z}(t)=&\,\bar{Z}(0)+\bar{W}(0)\beta(t)+\lambda\int_0^t\beta(t-s)\bar{X}(s)\bar{Y}(s)ds\\
&+\lambda\int_0^t\bar{Y}(s)\bar{Z}(s)ds.
\end{aligned}
\end{align}
\end{theorem}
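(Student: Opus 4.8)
The plan is to prove convergence of the three ``free'' coordinates $(\bar W^n,\bar Y^n,\bar Z^n)$; the identity \eqref{Xbar} for $\bar X$ then follows from $\bar X^n=1-\bar W^n-\bar Y^n-\bar Z^n$ by the continuous mapping theorem. Two preliminary facts underpin everything. First, the deterministic system \eqref{Wbar}--\eqref{Zbar} is a coupled system of Volterra integral equations with bounded measurable kernels ($\lambda,\alpha$ bounded, and $F^c,G_0^c,H^c,\psi,\psi^\beta,\dots$ all bounded) and bilinear forcing terms $\bar X\bar Y$ and $\bar Y\bar Z$; since every coordinate is confined to $[0,1]$, these forcing terms are globally Lipschitz on the relevant domain, so a standard Picard/Gr\"onwall argument gives a unique solution on each $[0,T]$. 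Second, all prelimit proportions lie in $[0,1]$ by construction, which supplies the a priori bounds needed to close the estimates.

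Next I would set up the martingale decomposition of the driving counting processes. Writing $\bar A^n(t)=\int_0^t\lambda(s)\bar X^n(s)\bar Y^n(s)\,ds+\bar M_A^n(t)$ and $\bar B^n(t)=\int_0^t\alpha(s)\bar Y^n(s)\bar Z^n(s)\,ds+\bar M_B^n(t)$, the stochastic-intensity assumption gives $\langle\bar M_A^n\rangle(t)=n^{-1}\int_0^t\lambda\bar X^n\bar Y^n\,ds$ and similarly for $\bar M_B^n$; since the integrands are bounded these predictable quadratic variations are $O(n^{-1})$, so by Doob's maximal inequality $\|\bar M_A^n\|_T$ and $\|\bar M_B^n\|_T$ tend to $0$ in probability. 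I would then split each of $\bar W^n,\bar Y^n,\bar Z^n$ into, first, initial-condition sums of i.i.d.\ indicators over the $W^n(0),Y^n(0),Z^n(0)$ terms, which converge uniformly on compacts by the LLN for i.i.d.\ sequences together with Condition~I (producing the $\bar W(0)F_0^c$, $\bar Y(0)G_0^c$, $\bar Z(0)H_0^c$, $\bar W(0)\phi_0$, and $\bar W(0)\phi_0^\beta$ terms), and second, sums indexed by $A^n(t)$ or $B^n(t)$ carrying the i.i.d.\ marks $(\eta_i,\theta_i,\zeta_i,\U_i)$.

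The crux is a law of large numbers for these marked sums, and this is the step I expect to be the main obstacle. For a bounded measurable weight $g_t$ with marks independent of the point process, the aim is to show $n^{-1}\sum_{i=1}^{A^n(t)}g_t(\tau_i^n,\eta_i,\dots)-\int_0^t\E[g_t(s,\eta,\dots)]\,d\bar A^n(s)\to0$ uniformly in $t\le T$. Conditionally on $A^n$ and the epochs $\tau_i^n$, the centered summands are independent with variances at most $1/4$, so the conditional variance of the scaled sum is $O(\bar A^n(t)/n)=O(n^{-1})$; a Chebyshev bound gives pointwise convergence, and a maximal-inequality/$C$-tightness argument promotes it to uniform convergence because the limit is continuous. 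Replacing $d\bar A^n$ by $\lambda\bar X^n\bar Y^n\,ds$ costs an integral against $d\bar M_A^n$, which vanishes by the previous step (the kernels $F^c,\psi,\psi^\beta,H^c$ have bounded variation, so integration by parts applies). The delicacy is the double randomness---the random locations $\tau_i^n$, governed by an intensity that itself depends on the unknown limit, together with the i.i.d.\ marks---which blocks a direct appeal to an external LLN and forces a self-consistent treatment; this is exactly where I would invoke the structural results of Subsection~\ref{preliminaries} for the class of counting processes at hand.

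Finally I would close the loop. After the reductions, $(\bar W^n,\bar Y^n,\bar Z^n)$ satisfies the Volterra system \eqref{Wbar}--\eqref{Zbar} up to error terms (martingale, centering, and initial-condition errors) that vanish uniformly on $[0,T]$ in probability. Setting $e^n(t):=\sup_{s\le t}\|(\bar W^n-\bar W,\bar Y^n-\bar Y,\bar Z^n-\bar Z)(s)\|$ and using the Lipschitz bound on the bilinear forcing together with the confinement to $[0,1]$, one obtains a random Gr\"onwall inequality $e^n(t)\le\delta^n+C\int_0^t e^n(s)\,ds$ with $\delta^n\to0$ in probability and $C$ deterministic, whence $e^n(T)\le\delta^n e^{CT}\to0$ for every $T$, which is precisely convergence in probability in $D^4$. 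The identification of the limiting kernels $\phi_0,\phi_0^\beta,\psi,\psi^\beta$ from the mark distributions $F,G(\cdot\mid x),H(\cdot\mid x)$ is then routine bookkeeping matching the reduced equations to \eqref{Wbar}--\eqref{Zbar}.
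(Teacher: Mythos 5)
Your proposal is correct, and it reaches the FLLN by a genuinely different route than the paper, even though the basic decomposition (initial-configuration sums plus marked sums over $A^n$ and $B^n$, pivoting on the conditional expectation $\E[\,\cdot\,|A^n]$) is the same, being essentially forced by the model. The differences are in how each piece is controlled. Where you write $\bar A^n=\int_0^{\cdot}\lambda(s)\bar X^n(s)\bar Y^n(s)\,ds+\bar M_A^n$ and kill $\bar M_A^n$ by the predictable quadratic variation bound plus Doob's inequality, the paper instead proves tightness of $\bar A^n,\bar B^n$ via Aldous' criterion (Lemma \ref{tightness}), passes to a convergent subsequence, and only disposes of $\bar A^n-\E[\bar A^n|\F^n_{\cdot}]$ at the end by a martingale argument (Lemma \ref{AnBn}, citing Theorem 1.4, Ch.~7 of \cite{Kurtz}). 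Where you control the centered marked sums by conditional independence of the marks, a conditional variance bound, Chebyshev, and tightness, the paper proves the corresponding statement (Theorem \ref{VnEVnzero}) through a Ces\`aro/Kronecker/Kolmogorov two-series argument. And where you close with a pathwise Gr\"onwall comparison between the prelimit system and the deterministic Volterra system---which makes uniqueness and full-sequence convergence automatic and quantitative---the paper identifies the limit of each conditional expectation along the subsequence by integration by parts and a continuity argument, then matches the subsequential limit with \eqref{Wbar}--\eqref{Zbar}. Your route is more elementary and self-contained (Doob, Chebyshev, Gr\"onwall) and avoids subsequence bookkeeping; the paper's route builds tightness machinery (Lemma \ref{tightness}, Corollary \ref{sumtight}) that it reuses wholesale in the FCLT, so it is more economical at the level of the whole paper. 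Two small points to tighten in your write-up: $\bar A^n(T)$ is only stochastically bounded (the intensity bound gives $\E[\bar A^n(T)]\leq T\sup_s\lambda(s)$), not almost surely bounded, so your ``conditional variance $O(\bar A^n(t)/n)=O(n^{-1})$'' must be read as a bound in probability; and the kernels $\psi,\psi^{\beta}$ are not monotone but are differences of two bounded monotone functions, which is exactly what legitimizes your integration by parts against $d\bar M_A^n$---worth stating explicitly.
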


\subsection{Fluctuations Around the Limiting Proportions}
Beyond the macroscopic limit established in Theorem \ref{FLLN}, we investigate the system's fluctuations via the diffusion-scaled processes. Let
\begin{equation*}
    \hat{W}^n(t):=\sqrt{n}(\bar{W}^n(t)-\bar{W}(t)),\ \hat{Y}^n(t):=\sqrt{n}(\bar{Y}^n(t)-\bar{Y}(t)),
\end{equation*}
and
\begin{equation*}
    \hat{Z}^n(t):=\sqrt{n}(\bar{Z}^n(t)-\bar{Z}(t)).
\end{equation*}
Also let $\hat{X}^n(t):=-\hat{W}^n(t)-\hat{Y}^n(t)-\hat{Z}^n(t).$

\noindent {\bf{Condition II}} There exist random variables $\hat{W}(0)$, $\hat{Y}(0)$ and $\hat{Z}(0)$ such that, as $n$ goes to infinity,
\begin{equation*}
    (\hat{W}^n(0),\hat{Y}^n(0),\hat{Z}^n(0))\Rightarrow(\hat{W}(0),\hat{Y}(0),\hat{Z}(0))
\end{equation*}
and $\sup_{n}\E[\hat{W}^n(0)^2]+\sup_{n}\E[\hat{Y}^n(0)^2]+\sup_{n}\E[\hat{Z}^n(0)^2]<\infty$.

\begin{theorem}[FCLT]\label{FCLT}
For the random spreading rumor model with contestants and under Condition II we have that, as $n \rightarrow +\infty$,
\begin{equation*} 
    (\hat{X}^n,\hat{W}^n,\hat{Y}^n,\hat{Z}^n)\Rightarrow(\hat{X},\hat{W},\hat{Y},\hat{Z})\in D^4
\end{equation*}
where $(\hat{X},\hat{W},\hat{Y},\hat{Z})$ is the unique solution to the following system of stochastic Volterra integral equations:
\begin{align}
    \hat{X}(t)=&-\hat{W}(t)-\hat{Y}(t)-\hat{Z}(t),\label{Xhat}\\
    \hat{W}(t)=&\hat{W}(0)F^c(t)+\hat{W}_{0}(t)+\hat{W}_1(t)\\
    &+\lambda\int_0^tF^c(t-s)(\hat{X}(s)\bar{Y}(s)+\bar{X}(s)\hat{Y}(s))ds,\nonumber\\
    \hat{Y}(t)=&\hat{W}(0)(F(t)-\beta(t))+\hat{Y}(0)+\hat{Y}_{0}(t)+\hat{Y}_1(t)-\hat{B}(t)\\
    &+\lambda\int_0^t(F(t-s)-\beta(t-s))(\hat{X}(s)\bar{Y}(s)+\bar{X}(s)\hat{Y}(s))ds\nonumber\\
    \hat{Z}(t)=&\hat{W}(0)\beta(t)+\hat{Z}(0)+\hat{Z}_{0}(t)+\hat{Z}_1(t)+\hat{B}(t)\label{Zhat}\\
    &+\lambda\int_0^t \beta(t-s)(\hat{X}(s)\bar{Y}(s)+\bar{X}(s)\hat{Y}(s))ds.\nonumber
\end{align}
The processes $\hat{W}_0,\hat{W}_1,\hat{Y}_{0},\hat{Y}_{1},\hat{Z}_{0},\hat{Z}_{1}$ and $\hat{B}$ are all zero-mean Gaussian processes.
\end{theorem}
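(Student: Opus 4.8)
The plan is to convert the explicit representations of $W^n,Y^n,Z^n$ into a closed linear system of stochastic Volterra integral equations for the diffusion-scaled vector $(\hat X^n,\hat W^n,\hat Y^n,\hat Z^n)$, to prove that the driving noise converges to a Gaussian limit, and then to pass to the limit through the (linear, hence continuous) solution operator. The first step is to center each random sum by its conditional mean given the driving counting process. For the sum $\sum_{i=1}^{A^n(t)}\1(t<\tau_i^n+\eta_i)$ defining $W^n$, the marks $\eta_i$ are i.i.d. with law $F$ and independent of $A^n$, so the conditional mean of the $i$-th term given the epochs is $F^c(t-\tau_i^n)$; the centered sum $n^{-1/2}\sum_{i\le A^n(t)}\bigl(\1(t<\tau_i^n+\eta_i)-F^c(t-\tau_i^n)\bigr)$ is one Gaussian building block, while the predictable remainder $n^{-1}\int_0^t F^c(t-s)\,dA^n(s)$ is split, using Appendix~\ref{StochasticIntensity}, into its $\F^n$-compensator $n^{-1}\int_0^t F^c(t-s)\lambda(s)X^n(s)Y^n(s)/n\,ds$ and a martingale term built from $M_A^n$. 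Subtracting the fluid limit of Theorem~\ref{FLLN} from the compensator and using the exact identity $\sqrt n(\bar X^n\bar Y^n-\bar X\bar Y)=\hat X^n\bar Y^n+\bar X\hat Y^n$ reproduces the convolution drift $\int_0^t\lambda(s)F^c(t-s)(\hat X(s)\bar Y(s)+\bar X(s)\hat Y(s))\,ds$. Performing this centering for every sum in the representations of $W^n,Y^n,Z^n$ identifies the ten Gaussian processes $\hat W_0,\hat W_1,\hat Y_{0,1},\hat Y_{0,2},\hat Y_1,\hat Y_2,\hat Z_{0,1},\hat Z_{0,2},\hat Z_1,\hat Z_2$ and writes $(\hat X^n,\hat W^n,\hat Y^n,\hat Z^n)$ as the stated system plus remainder terms of the form $n^{-1/2}\hat X^n\hat Y^n$ and analogues.

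Next I would prove joint convergence of the noise vector to a zero-mean Gaussian process. The noise has three sources, matching the independence hypotheses of the model: the initial-count fluctuations controlled by Condition~II; the empirical fluctuations of the i.i.d. mark families $(\eta_i,\theta_i)$, $(\eta_i,\zeta_i)$, $\eta_i^0,\theta_i^0,\zeta_i^0$ and the uniforms $\U_i,\U_i^0$; and the compensated counting-process martingales $M_A^n,M_B^n$. For the martingale parts I would apply the martingale FCLT: the scaled processes $n^{-1/2}M_A^n$ and $n^{-1/2}M_B^n$ have predictable quadratic variations $\int_0^t\lambda(s)\bar X^n(s)\bar Y^n(s)\,ds$ and $\int_0^t\alpha(s)\bar Y^n(s)\bar Z^n(s)\,ds$, which converge by Theorem~\ref{FLLN}; the jumps have size $n^{-1/2}\to0$; and the predictable cross-variation vanishes since $A^n$ and $B^n$ a.s. have no common jumps, so the limits are independent continuous Gaussian martingales. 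The mark-fluctuation parts are, conditionally on the counting processes, sums of independent centered terms indexed by the class $\{(\tau,\eta)\mapsto\1(t<\tau+\eta):t\ge0\}$ and its $\theta,\zeta,\U$-analogues, so a conditional functional CLT yields Gaussian limits; the independence built into the model (marks and initial data independent of the counting processes) then delivers joint Gaussianity and fixes the covariance structure computed in the next subsection.

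The principal obstacle is that every noise term enters the equations through a convolution $\int_0^t k(t-s)\,dM^n_\bullet(s)$, with $k\in\{F^c,\psi,\psi^\beta,H^c,\dots\}$, or through a $t$-dependent empirical sum, so that these objects are not martingales in $t$ and the martingale FCLT does not apply to them directly. I intend to handle them by a preliminary lemma (Subsection~\ref{preliminaries}) reducing each convolution to a continuous functional of the underlying converging process: since each kernel is monotone and of bounded variation, integration by parts gives $\int_0^t k(t-s)\,dM_A^n(s)=k(0)M_A^n(t)-\int_0^t M_A^n(s)\,d_s k(t-s)$, a continuous image of the path $M_A^n$, and similarly for the empirical sums. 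Joint tightness then follows from the Aldous--Rebolledo criterion for the martingale pieces and from a maximal inequality over the index class for the empirical pieces, after which the continuous-mapping theorem transfers the convergence of $M_A^n,M_B^n$ and of the empirical processes to the convolved noise.

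Finally I would pass to the limit in the Volterra system. Because the kernels $k$, the intensities $\lambda,\alpha$ and the fluid limits $\bar X,\bar Y,\bar Z$ are all bounded, a Gronwall estimate applied to the prelimit system gives stochastic boundedness of $\sup_{t\le T}\lvert(\hat X^n,\hat W^n,\hat Y^n,\hat Z^n)(t)\rvert$, which forces the linearization remainders $n^{-1/2}\hat X^n\hat Y^n$ (and their analogues) to be $o_{\prob}(1)$. The limiting system is linear in $(\hat X,\hat W,\hat Y,\hat Z)$ with bounded convolution kernels, so a standard Picard/Gronwall argument for linear Volterra equations yields existence and uniqueness of its solution together with Lipschitz continuity of the map from the driving noise to the solution. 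Combining the convergence of the noise, the negligibility of the remainder, and the continuity of the solution map, the continuous-mapping theorem gives $(\hat X^n,\hat W^n,\hat Y^n,\hat Z^n)\Rightarrow(\hat X,\hat W,\hat Y,\hat Z)$ in $D^4$, and uniqueness identifies the limit as the announced solution.
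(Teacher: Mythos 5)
Your proposal reaches the theorem by the same overall architecture as the paper --- decompose $(\hat{W}^n,\hat{Y}^n,\hat{Z}^n)$ into Gaussian-converging noise plus a linear Volterra system, then conclude by continuity of the solution map (your Gronwall/Picard argument is exactly the role of Lemma \ref{Volterraintegral}) and the continuous mapping theorem --- but it treats the crucial noise terms in a genuinely different way. The paper keeps each centered sum, e.g. $\hat{W}_1^n(t)=\sqrt{n}\bigl(\tfrac1n\sum_{i\leq A^n(t)}\1(t<\tau_i^n+\eta_i)-\int_0^t\lambda(s)F^c(t-s)\bar{X}^n(s)\bar{Y}^n(s)ds\bigr)$, as a single object, assigns to it a ``frozen-$t$'' stochastic intensity (Proposition \ref{StocInt}), asserts it is an $\F_t^n$-local martingale (Lemma \ref{martingales}), computes its quadratic variation (Proposition \ref{ConvQVhat}), and invokes the martingale FCLT of Ethier--Kurtz to get Lemma \ref{LemmaConv}. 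You instead split that object into two pieces: the mark-empirical fluctuation $n^{-1/2}\sum_{i\leq A^n(t)}\bigl(\1(t<\tau_i^n+\eta_i)-F^c(t-\tau_i^n)\bigr)$, handled by a conditional empirical-process CLT given the counting process, and the convolution $n^{-1/2}\int_0^t F^c(t-s)\,d\bigl(A^n-\int_0^\cdot\lambda(r)X^n(r)Y^n(r)/n\,dr\bigr)(s)$ of the genuinely compensated martingale, handled by integration by parts plus continuous mapping after the martingale FCLT is applied to the compensated counting processes themselves. This buys you something real: the martingale FCLT is only ever applied to processes that are actually martingales in $t$, whereas the paper's Lemma \ref{martingales} claims the martingale property for processes whose summands and compensator kernels depend on the terminal time $t$ --- the martingale property established via Proposition \ref{StocInt} holds in the auxiliary variable $s$ for each frozen $t$, not along the diagonal, which is precisely the obstacle you flag. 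The price of your route is a finer decomposition (two noise processes per sum instead of one), the need to prove a triangular-array empirical FCLT with non-identically distributed summands (the conditional laws depend on $\tau_i^n$, so you need bracketing or fourth-moment increment bounds for tightness), and more bookkeeping to reassemble the covariances of Table \ref{TableOfCovariances}; the paper's route is shorter and gives the quadratic variations directly, at the cost of the delicate local-martingale claim. Your handling of the drift linearization also differs mildly but harmlessly: the paper keeps the exact mixed-coefficient form $\hat{X}^n\bar{Y}^n+\bar{X}\hat{Y}^n$ and builds the convergence of the coefficients into the continuity of the map $\Gamma$, while you convert to limiting coefficients and absorb the difference into remainders $n^{-1/2}\hat{X}^n\hat{Y}^n$ killed by stochastic boundedness; both are workable.
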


Now, we state the last main result of this work. 

\begin{lemma}\label{covariances}
The non-null covariances between these processes are given in Table \ref{TableOfCovariances}.
    \begin{table}[h]
    \centering
    \begin{tabular}{|c|c|}
    \hline $\Cov[\hat{W}_{0}(t),\hat{W}_{0}(r)]$&$\bar{W}(0)F(r\wedge t)(1-F(r\vee t))$\\
    \hline$\Cov[\hat{Y}_{0}(t),\hat{Y}_{0}(r)]$&$\bar{W}(0)(F(r\wedge t)-\beta(r\wedge t)(1-(F(r\vee t)-\beta(r\vee t)))$\\
    \hline$\Cov[\hat{Z}_{0}(t),\hat{Z}_{0}(r)]$&$\bar{W}(0)\beta(r\wedge t)(1-\beta(r\vee t))$\\
    \hline$\Cov[\hat{W}_{0}(t),\hat{Y}_{0}(r)]$&$\bar{W}(0)\1(t\leq r)(F(r)-\beta(r)-(F(t)-\beta(t)))$\\
    \hline$\Cov[\hat{W}_{0}(t),\hat{Z}_{0}(r)]$&$\bar{W}(0)\1(t\leq r)(\beta(r)-\beta(t))$\\
    \hline$\Cov[\hat{W}_{1}(t),\hat{W}_{1}(r)]$&$\lambda\int_0^{r\wedge t}F^c(r\wedge t-s)\bar{X}(s)\bar{Y}(s)ds$\\
    \hline$\Cov[\hat{Y}_{1}(t),\hat{Y}_{1}(r)]$&$\lambda\int_0^{t\wedge r} (F(t\wedge r-s)-\beta(t\wedge r -s))\bar{Y}(s)\bar{Z}(s)ds$\\
    \hline$\Cov[\hat{Z}_{1}(t),\hat{Z}_{1}(r)]$&$\lambda\int_0^{t\wedge r}\beta(t-s)\bar{X}(s)\bar{Y}(s)ds$\\
    \hline$\Cov[\hat{W}_{1}(t),\hat{Y}_{1}(r)]$&$-\lambda\int_0^{t\wedge r} (F(t\wedge r-s)-\beta(t\wedge r-s))\bar{Y}(s)\bar{Z}(s)ds$\\
    \hline$\Cov[\hat{W}_{1}(t),\hat{Z}_{1}(r)]$&$-\lambda\int_0^{t\wedge r}\beta(t-s)\bar{X}(s)\bar{Y}(s)ds$\\
    \hline$\Cov[\hat{B}(t),\hat{B}(r)]$&$\lambda\int_0^{t\wedge r}\bar{Y}(s)\bar{Z}(s)ds$\\
    \hline
    \end{tabular}
    \caption{Covariances between the processes obtained in Theorem \ref{FCLT}}
    \label{TableOfCovariances}
\end{table}
\end{lemma}

\section{Proofs}\label{Proofs}

\subsection{Preliminaries}\label{preliminaries}
In this section, we establish basic results for a general class of processes, including those involved in our definition, to establish the Functional Law of Large Numbers.

\begin{lemma}\label{tightness}
    Let $\{C^n\}_{n\geq1}$ be a sequence of counting processes with $\F^n$-stochastic intensity $\gamma_n$ such that $\sup_{n\geq1}n^{-1}\gamma_n$ is bounded almost surely. Then $\{\bar{C}^n\}_{n\geq1}$ is tight on $D$.
\end{lemma}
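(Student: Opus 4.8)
The plan is to use the Doob--Meyer decomposition of each counting process into its compensator and a martingale, to fluid-scale both pieces, and to prove tightness separately for each piece. Since tightness in $D=D^1$ amounts to tightness in every $D([0,T],\R)$, I fix $T>0$ throughout. By the theory of stochastic intensity recalled in Appendix \ref{StochasticIntensity}, the process $M^n(t):=C^n(t)-\int_0^t\gamma_n(s)\,ds$ is an $\F^n$-local martingale whose predictable quadratic variation is $\langle M^n\rangle_t=\int_0^t\gamma_n(s)\,ds$ and whose optional quadratic variation is $[M^n]_t=C^n(t)$, the jumps of $C^n$ having unit size and the compensator being continuous. Setting $\bar{A}^n(t):=n^{-1}\int_0^t\gamma_n(s)\,ds$ and $\bar{M}^n:=n^{-1}M^n$, I obtain $\bar{C}^n=\bar{A}^n+\bar{M}^n$, so it suffices to show that $\{\bar{A}^n\}$ is tight and that $\bar{M}^n$ is asymptotically negligible in the uniform norm.

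For the compensator part, let $K<\infty$ denote the almost sure bound for $\sup_n n^{-1}\gamma_n$. Then, for a.e.\ $\omega$, every $\bar{A}^n(\cdot,\omega)$ is nondecreasing, vanishes at $0$, and is $K$-Lipschitz on $[0,T]$; hence it belongs to the fixed set $\mathcal{K}_0:=\{f\in C([0,T],\R):f(0)=0,\ |f(t)-f(s)|\le K|t-s|\ \forall s,t\}$, which is compact in $C([0,T],\R)$ by Arzelà--Ascoli, hence compact in $D([0,T],\R)$. As $\{\bar{A}^n\}$ is thus supported, up to a null set, on a single compact set, it is $C$-tight.

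For the martingale part, the a.s.\ bound gives $\E[\langle M^n\rangle_T]=\E\big[\int_0^T\gamma_n(s)\,ds\big]\le nKT<\infty$, so $M^n$ (and hence $\bar{M}^n$) is in fact a square-integrable martingale rather than merely a local one. Doob's $L^2$ maximal inequality combined with the isometry $\E[(M^n(T))^2]=\E[[M^n]_T]=\E[C^n(T)]=\E[\langle M^n\rangle_T]$ yields
\begin{equation*}
    \E\Big[\sup_{t\le T}(\bar{M}^n(t))^2\Big]\le 4\,\E[(\bar{M}^n(T))^2]=\frac{4}{n^2}\,\E[C^n(T)]\le\frac{4KT}{n}\xrightarrow[n\to\infty]{}0,
\end{equation*}
so $\|\bar{M}^n\|_\infty\to0$ in $L^2$, hence in probability. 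Since the Skorokhod metric on $D([0,T],\R)$ is dominated by the uniform metric, $\bar{C}^n$ is then a perturbation of the tight family $\{\bar{A}^n\}$ that vanishes uniformly in probability; by the standard fact that tightness is preserved under such perturbations (for each $\varepsilon$ pick a compact $\mathcal{K}$ with $\inf_n\prob(\bar{A}^n\in\mathcal{K})\ge1-\varepsilon$, and note that the closed $\delta$-enlargement of $\mathcal{K}$, compact in the Polish space $D([0,T],\R)$, captures $\bar{C}^n$ with probability at least $1-\varepsilon-\prob(\|\bar{M}^n\|_\infty>\delta)$), the family $\{\bar{C}^n\}$ is tight on $D([0,T],\R)$. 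As $T$ was arbitrary, $\{\bar{C}^n\}$ is tight on $D$.

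The main obstacle is not any single estimate but the justification of the martingale structure that drives the whole argument: one must know that a counting process admitting the $\F^n$-stochastic intensity $\gamma_n$ has compensator $\int_0^\cdot\gamma_n(s)\,ds$, that $[M^n]_t=C^n(t)$, and that the standing a.s.\ bound upgrades $M^n$ from a local to a genuinely square-integrable martingale so that Doob's inequality applies with no further localization; these are exactly the facts furnished by Appendix \ref{StochasticIntensity}. A secondary point requiring care is the reading of the hypothesis ``$\sup_n n^{-1}\gamma_n$ bounded almost surely'': should the bound $K$ be random rather than deterministic, I would first localize on the events $\{K\le m\}$ before taking expectations in the martingale step, which leaves the conclusion unchanged.
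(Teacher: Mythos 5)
Your route is genuinely different from the paper's. The paper verifies Aldous' criterion directly: Markov's inequality plus the intensity bound give $\prob[|\bar{C}^n(\tau+\delta)-\bar{C}^n(\tau)|\geq\varepsilon]\leq M\delta/\varepsilon$ uniformly over stopping times, and tightness follows. You instead split $\bar{C}^n$ into the scaled compensator $\bar{A}^n$ and the scaled martingale $\bar{M}^n$, put the compensator paths into a single fixed compact of Lipschitz functions via Arzel\`a--Ascoli, and kill the martingale part with Doob's $L^2$ inequality. Those pieces are sound: Theorem \ref{Martintensity} in the appendix with the predictable integrand $X\equiv 1$ gives the genuine (not just local) martingale property, since $\E[\int_0^T\gamma_n(s)ds]\leq nKT<\infty$; the identity $[M^n]_T=C^n(T)$ holds because the compensator is continuous and the jumps are of unit size; and your bound $\E[\sup_{t\le T}\bar{M}^n(t)^2]\le 4KT/n$ is correct. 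Your argument in fact buys more than the paper's: it shows $C$-tightness of the compensator part and gives, quantitatively, the vanishing of $\bar{C}^n$ minus its integrated intensity, which is essentially the martingale estimate behind Lemma \ref{AnBn}.

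However, the final gluing step rests on a false claim: in $D([0,T],\R)$ with the Skorokhod topology, the closed $\delta$-enlargement of a compact set is \emph{not} compact, so you cannot conclude tightness of $\{\bar{C}^n\}$ merely by trapping it in such an enlargement. Concretely, take $f_k:=\delta\1_{[1/2,\,1/2+1/k)}$ on $[0,1]$: every $f_k$ lies within uniform (hence Skorokhod) distance $\delta$ of the zero function, i.e.\ in the closed $\delta$-enlargement of the compact $\{0\}\subset\mathcal{K}_0$, yet $(f_k)_k$ has no convergent subsequence in $D$ --- any subsequential limit would have to vanish at each of its continuity points, hence be identically zero, and Skorokhod convergence to a continuous limit implies uniform convergence, contradicting $\|f_k\|_\infty=\delta$. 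The gap is repairable without changing your structure. One fix is the modulus criterion for tightness in $D$ (see \cite{Bill}): since $w'_{f+g}(\delta)\le w'_{f}(\delta)+2\|g\|_\infty$ for the Skorokhod modulus $w'$, you get $w'_{\bar{C}^n}(\delta)\le w'_{\bar{A}^n}(\delta)+2\|\bar{M}^n\|_\infty$ together with $\sup_{t\le T}|\bar{C}^n(t)|\le KT+\|\bar{M}^n\|_\infty$, and both requirements of the criterion follow from what you have proved. Alternatively, one can invoke the genuine (but nontrivial) fact that a family $\{X^n\}$ is tight in a Polish space as soon as for every $\varepsilon,\delta>0$ there is a compact $K$ with $\sup_n\prob(d(X^n,K)>\delta)<\varepsilon$; its proof intersects enlargements $K_m^{1/m}$ over a whole sequence of compacts precisely because a single enlargement fails to be compact, which is the point your argument glosses over.
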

\begin{proof}
    Let $M>0$ be an upper bound for $\sup_{n\geq1}n^{-1}\gamma_n$. By fixing $m>0$ we have:
    \begin{eqnarray*}
        \lim_{a\to\infty}\limsup_{n}\prob[\sup_{s\in[0,m]}|\bar{C}^n(s)|\geq a]&\leq&\lim_{a\to\infty}\limsup_{n}\frac{\E[\sup_{s\in[0,m]}|\bar{C}^n(s)|]}{a}\\
        &\leq&\lim_{a\to\infty}\limsup_{n}\frac{\E[|\bar{C}^n(m)|]}{a}\\
        &\leq&\lim_{a\to\infty}\limsup_{n}\frac{M\cdot m}{a}=0
    \end{eqnarray*}

    Moreover, for any $\varepsilon,\eta$ and $m$, there exist $\delta_0$ and $n_0$ such that if $\delta\leq\delta_0$ and $n\geq n_0$, and if $\tau$ is a stopping time of finite range for the process $C^n(t)$ satisfying $\tau\leq m$, then
    \begin{equation*}
        \prob[|\bar{C}^n(\tau+\delta)-\bar{C}^n(\tau)|\geq\varepsilon]\leq\eta.
    \end{equation*}
    Indeed,
    \begin{eqnarray*}
        \prob[|\bar{C}^n(\tau+\delta)-\bar{C}^n(\tau)|\geq\varepsilon]&\leq&\frac{\E[\bar{C}^n(\tau+\delta)-\bar{C}^n(\tau)]}{\varepsilon}\\
        &\leq&\frac{\E[\int_\tau^{\tau+\delta}n^{-1}\gamma_n(s)ds]}{\varepsilon}\\
        &\leq&\frac{\E[\int_\tau^{\tau+\delta}Mds]}{\varepsilon}\\
        &\leq&\frac{M\delta}{\varepsilon}.
    \end{eqnarray*}
    The result follows by applying Aldous' tightness criterion (see \cite{aldous1978stopping} or Theorem $16.10$ in \cite{Bill}).
\end{proof}

\begin{corollary}\label{AnBnTight}
    The sequences of counting processes $\{\bar{A}^n\}_{n\geq1}$ and $\{\bar{B}^n\}_{n\geq1}$ are tight on $D$.
\end{corollary}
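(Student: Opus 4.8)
The plan is to recognize Corollary \ref{AnBnTight} as an immediate application of Lemma \ref{tightness}, so the whole task reduces to checking that the two counting processes $A^n$ and $B^n$ satisfy the single hypothesis of that lemma, namely that their fluid-scaled stochastic intensities are almost surely uniformly bounded in $n$. First I would recall from the definition of the model that $A^n$ has $\F^n$-stochastic intensity $\gamma_n^A(t)=\lambda(t)X^n(t)Y^n(t)/n$ and that $B^n$ has $\F^n$-stochastic intensity $\gamma_n^B(t)=\alpha(t)Y^n(t)Z^n(t)/n$.

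Next I would exploit the fact that the population is closed and of size $n$: each of the class counts $X^n(t)$, $Y^n(t)$, $Z^n(t)$ is a nonnegative integer bounded above by $n$ for every $t\geq 0$. Combined with the standing assumption that $\lambda$ and $\alpha$ are bounded measurable functions, with $\|\lambda\|_\infty,\|\alpha\|_\infty<\infty$, this yields the pointwise estimates
\[
n^{-1}\gamma_n^A(t)=\frac{\lambda(t)X^n(t)Y^n(t)}{n^2}\leq\|\lambda\|_\infty,\qquad n^{-1}\gamma_n^B(t)=\frac{\alpha(t)Y^n(t)Z^n(t)}{n^2}\leq\|\alpha\|_\infty,
\]
uniformly in $t\geq 0$ and $n\geq 1$. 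Hence $\sup_{n\geq 1}n^{-1}\gamma_n^A$ and $\sup_{n\geq 1}n^{-1}\gamma_n^B$ are bounded almost surely, which is precisely the requirement in Lemma \ref{tightness}.

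Finally, applying Lemma \ref{tightness} separately to $\{A^n\}$ (with constant $M=\|\lambda\|_\infty$) and to $\{B^n\}$ (with constant $M=\|\alpha\|_\infty$) gives tightness of $\{\bar{A}^n\}$ and $\{\bar{B}^n\}$ on $D$. There is no substantial obstacle in this argument; the only point deserving a remark is that the factor $n^{-1}$ already built into the prescribed intensities, together with the trivial population bounds $X^nY^n\leq n^2$ and $Y^nZ^n\leq n^2$, is exactly what supplies the extra power of $n$ that turns the fluid-scaled intensity from $O(n)$ into $O(1)$, thereby matching the boundedness hypothesis of the lemma.
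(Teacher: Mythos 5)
Your proposal is correct and follows exactly the same route as the paper: both reduce the corollary to Lemma \ref{tightness} by observing that the bounds $X^n,Y^n,Z^n\leq n$ together with the boundedness of $\lambda$ and $\alpha$ force the fluid-scaled intensities $\lambda(t)X^n(t)Y^n(t)/n^2$ and $\alpha(t)Y^n(t)Z^n(t)/n^2$ to be uniformly bounded by $\|\lambda\|_\infty$ and $\|\alpha\|_\infty$, respectively. No gap; your remark about the extra power of $n$ is just a slightly more explicit restatement of the paper's one-line verification.
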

\begin{proof}
     Just note that the stochastic intensities of $A^n$ and $B^n$ are under the hypothesis of Lemma \ref{tightness}. Indeed, $\lambda>0$ and
     \begin{equation*}
         \sup_{n}\lambda\frac{X^n(t)}{n}\frac{Y^n(t)}{n}\leq\lambda,\quad\sup_{n}\lambda\frac{Y^n(t)}{n}\frac{Z^n(t)}{n}\leq\lambda,
     \end{equation*}
     for any $t\geq0$.
\end{proof}

Next corollary is proven in the same fashion as Lemma \ref{tightness} so we only give a sketch of its proof.

\begin{corollary}\label{sumtight}
    Let $\{C^n\}_{n\geq1}$ a sequence of counting processes under the hypothesis of Lemma \ref{tightness}. Assume that $(a_i)_{i\geq1}$ is a sequence of random variables and $f:\R_+^3\to\R$ is a bounded function. Let
    \begin{equation*}
        V^n(t):=\frac{1}{n}\sum_{i=1}^{C^n(t)}f(\tau_i^n,t,a_i),
    \end{equation*}
    where $\tau_1^n,\tau_2^n,\dots$ are the epochs of the process $C^{n}$. Then $\{V^n\}_{n\geq1}$ is tight on $D$.
\end{corollary}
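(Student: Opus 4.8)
The plan is to verify the two hypotheses of Aldous' tightness criterion in the same two-step fashion as Lemma \ref{tightness}: a compact-containment bound and a stopped-increment bound. Throughout I write $\Vert f\Vert_\infty$ for the uniform bound on $f$, let $M$ be the almost sure bound on $\sup_n n^{-1}\gamma_n$, let $\mu$ denote the common law of the $a_i$, and recall from the proof of Lemma \ref{tightness} that $\E[\bar{C}^n(m)]\le Mm$ for every $m>0$. Compact containment uses only boundedness of $f$: since $|V^n(t)|\le\frac1n\sum_{i=1}^{C^n(t)}|f(\tau_i^n,t,a_i)|\le\Vert f\Vert_\infty\bar{C}^n(t)$, the first chain of inequalities in Lemma \ref{tightness} gives $\lim_{a\to\infty}\limsup_n\prob[\sup_{s\in[0,m]}|V^n(s)|\ge a]\le\lim_{a\to\infty}\Vert f\Vert_\infty Mm/a=0$.

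For the stopped increment I would fix a finite-range stopping time $\tau\le m$ and split $V^n(\tau+\delta)-V^n(\tau)$ into an \emph{arrivals} piece $\frac1n\sum_{i=C^n(\tau)+1}^{C^n(\tau+\delta)}f(\tau_i^n,\tau+\delta,a_i)$, coming from epochs in $(\tau,\tau+\delta]$, and an \emph{update} piece $\frac1n\sum_{i=1}^{C^n(\tau)}\bigl(f(\tau_i^n,\tau+\delta,a_i)-f(\tau_i^n,\tau,a_i)\bigr)$, coming from the change of the time argument in the terms already present. The arrivals piece is handled verbatim as in Lemma \ref{tightness}: its absolute value is at most $\Vert f\Vert_\infty(\bar{C}^n(\tau+\delta)-\bar{C}^n(\tau))$, whose expectation is bounded by $\Vert f\Vert_\infty M\delta$ via $\E[\int_\tau^{\tau+\delta}n^{-1}\gamma_n(s)\,ds]\le M\delta$, so Markov's inequality controls it uniformly in $n$. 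This part is exactly the counting-process estimate and requires only boundedness.

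The update piece is the genuinely new term and the main obstacle, because boundedness of $f$ alone does not control it — indeed, without regularity of $t\mapsto f(x,t,a)$ the process $V^n$ need not even belong to $D$. Here I would exploit the independence of the marks: conditioning on $\mathcal{C}^n:=\sigma(C^n(s),\tau_i^n:s\ge0,\,i\in\N)$, the $a_i$ remain i.i.d.\ with law $\mu$ and independent of $\mathcal{C}^n$, so each present term contributes at most $\int|f(\tau_i^n,\tau+\delta,a)-f(\tau_i^n,\tau,a)|\,\mu(da)$ in conditional expectation. Introducing the $L^1(\mu)$-modulus $\omega_f(\delta):=\sup_{x\ge0}\sup_{t\in[0,m]}\int|f(x,t+\delta,a)-f(x,t,a)|\,\mu(da)$, the conditional expected absolute update is at most $\omega_f(\delta)\bar{C}^n(\tau)\le\omega_f(\delta)\bar{C}^n(m)$, so its expectation is at most $\omega_f(\delta)Mm$ and Markov's inequality closes the bound \emph{provided} $\omega_f(\delta)\to0$ as $\delta\to0$.

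The crux is therefore establishing $\omega_f(\delta)\to0$, which is an $L^1(\mu)$-equicontinuity-in-time property — not implied by boundedness, and precisely the property that makes $V^n$ a $D$-valued process. This class genuinely subsumes the indicator functions without being restricted to them: any $f$ uniformly continuous in its time argument (uniformly in $x,a$) gives $\omega_f(\delta)\to0$ at once, while for the Heaviside marks of the model, e.g.\ $f(x,t,a)=\1(t<x+a)$ and the products and differences appearing in $W^n,Y^n,Z^n$, one computes $\int|\1(t+\delta<x+a)-\1(t<x+a)|\,\mu(da)=F(t+\delta-x)-F(t-x)$, so $\omega_f(\delta)$ is dominated by the modulus of continuity of the governing law $F$ (respectively $G,H$), which vanishes whenever these laws are non-atomic, in particular when they admit bounded densities. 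Two points must be recorded carefully in the full write-up: the conditioning step should be carried out in a filtration where the marks of still-pending epochs keep their law $\mu$ given the past (as in the model, where a mark is revealed only at the transition it triggers), or else $\tau$ discretized on a fine grid so that only deterministic time pairs enter; and the hypothesis ``bounded function'' is to be read together with this equicontinuity. With $\omega_f(\delta)\to0$ in hand, Aldous' criterion (Theorem $16.2$ in \cite{Bill}) yields tightness of $\{V^n\}$ on $D$.
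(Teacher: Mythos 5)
Your decomposition into an \emph{arrivals} piece and an \emph{update} piece exposes something real: the paper omits this proof, asserting it is ``in the exactly same fashion'' as Lemma \ref{tightness}, but the lemma's stopped-increment estimate only bounds your arrivals piece, because for a counting process the increment \emph{is} the number of new points, whereas $V^n(\tau+\delta)-V^n(\tau)$ also contains $\frac1n\sum_{i\le C^n(\tau)}\bigl(f(\tau_i^n,\tau+\delta,a_i)-f(\tau_i^n,\tau,a_i)\bigr)$, which boundedness of $f$ alone does not control. You are also right that for a merely bounded $f$ the statement does not even parse, since $V^n$ need not be c\`adl\`ag; the corollary is only ever applied to indicator-type $f$ (e.g.\ $\1(t<\tau_i^n+\eta_i)$), for which $V^n\in D$ almost surely. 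So your diagnosis is a genuine improvement on the paper's one-line justification, and some time-regularity hypothesis on $f$ must indeed be read into the statement.

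Two gaps remain in your repair, though. First, your modulus $\omega_f(\delta)$ places the burden on the law $\mu$ of the marks and thus forces non-atomicity of $F,G,H$ --- an assumption the paper never makes and does not need. For the model's indicators the smoothing should be routed through the \emph{arrival times} rather than the marks: for deterministic $t$, the conditional expected update is at most $\int_0^t\bigl(F(t+\delta-s)-F(t-s)\bigr)\,d\bar{C}^n(s)$, and the intensity bound gives $\E[\,\cdot\,]\le M\int_0^t\bigl(F(t+\delta-s)-F(t-s)\bigr)\,ds=M\Bigl(\int_t^{t+\delta}F(u)\,du-\int_0^\delta F(u)\,du\Bigr)\le M\delta$, with no continuity of $F$ whatsoever. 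Second, the stopping-time step you flag is an unresolved gap as written: a natural-filtration stopping time $\tau$ of $V^n$ partially reveals the marks with $i\le C^n(\tau)$ (for $f=\1(t<x+a)$ the conditional law of a still-pending mark is $\mu(\cdot\mid a>\tau-\tau_i^n)$, whose hazard can blow up), so the $a_i$ are no longer i.i.d.\ $\mu$ given the past, and your alternative of discretizing $\tau$ on a grid does not close with first-moment Markov: covering $(\tau,\tau+\delta]$ by $O(m/\delta)$ deterministic windows each of expected mass $O(\delta)$ and union-bounding leaves a quantity of order $Mm/\varepsilon$, not small. One way to finish is to note that given $\mathcal{C}^n$ the marks are i.i.d., so each window count concentrates (conditional Chernoff/Bernstein at scale $n$), after which the union bound over the grid does succeed; alternatively one works with the martingale decomposition of the associated ``death'' counting process. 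Either way, this step needs an actual argument, not the parenthetical remark; with it in place, your Aldous verification is sound and proves the corollary in the form the paper actually uses it.
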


\noindent {\it{Sketch of the proof.}}
    Let $M>0$ as in the proof of Lemma \ref{tightness} and $L\geq0$ be an upper bound for $|f|$. Observe that for $m'>0$ fixed, we have
    \begin{equation*}
        \E[\sup_{s\in[0,m']}|V^n(s)|]\leq M\cdot L\cdot m',
    \end{equation*}
    and for $\varepsilon',\eta'$ and $\delta'$ positive constants and $\tau'$ stopping time satisfying $\tau'\leq m'$,
    \begin{equation*}
        \E[|V^n(\tau'+\delta')-V^n(\tau')|]\leq\E[L\cdot|\bar{C}^n(\tau'+\delta')-\bar{C}^n(\tau')|]\leq L\cdot M\delta'.
    \end{equation*}
    And the result follows by Aldous' tightness criterion.
    $\square$

    Now let
    \begin{equation*}
        \tilde{V}^n(t):=\frac{1}{n}\sum_{i=1}^{C^n(t)}\E[f(\tau_i^n,t,a_i)|\tau_i^n].
    \end{equation*}

 \begin{theorem}\label{VnEVnzero}
Under the conditions of Corollary \ref{sumtight}, $V^n-\tilde{V}^n$ converges in probability to $0$ as $n\to\infty$.
\end{theorem}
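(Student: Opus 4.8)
The plan is to write $D^n:=V^n-\E[V^n\mid C^n]$ as a sum over the epochs of $C^n$ of \emph{centered marks} and to exploit that, conditionally on the whole path of $C^n$, these centered terms are independent with conditional mean zero. Since the marks $(a_i)$ are mutually independent and independent of $C^n$, conditioning on $C^n$ fixes the count $C^n(t)$ and the epochs $\tau_i^n$ while leaving the $a_i$ independent with their original laws. Setting $g(s,t):=\E[f(s,t,a_1)]$ (we use here that the $a_i$ are identically distributed; in general one replaces $g$ by the $i$-dependent mean, which changes nothing below) and $h:=f-g$, which is bounded by $2\|f\|_\infty$, one has $\E[V^n(t)\mid C^n]=\frac1n\sum_{i=1}^{C^n(t)}g(\tau_i^n,t)$ and hence
\begin{equation*}
D^n(t)=V^n(t)-\E[V^n(t)\mid C^n]=\frac1n\sum_{i=1}^{C^n(t)}h(\tau_i^n,t,a_i),\qquad \E[h(\tau_i^n,t,a_i)\mid C^n]=0.
\end{equation*}

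First I would establish convergence of the finite-dimensional distributions to $0$ by a conditional second-moment estimate. Conditioning on $C^n$ and using that each $\tau_i^n$ is $C^n$-measurable while the $a_i$ remain conditionally independent, the cross terms vanish and
\begin{equation*}
\E[D^n(t)^2]=\E\big[\Var(D^n(t)\mid C^n)\big]=\frac1{n^2}\,\E\Big[\sum_{i=1}^{C^n(t)}\Var(h(\tau_i^n,t,a_i)\mid C^n)\Big]\le\frac{4\|f\|_\infty^2}{n^2}\,\E[C^n(t)].
\end{equation*}
By the defining property of the stochastic intensity and the bound $n^{-1}\gamma_n\le M$ from Lemma \ref{tightness}, one has $\E[C^n(t)]=\E\big[\int_0^t\gamma_n(s)\,ds\big]\le nMt$, so that $\E[D^n(t)^2]\le 4\|f\|_\infty^2 Mt/n\to0$. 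Thus $D^n(t)\to0$ in $L^2$, hence in probability, for every fixed $t$; applying this to finitely many times gives convergence of all finite-dimensional distributions to $0$.

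It then remains to upgrade pointwise convergence to convergence in $D$. The key observation is that $D^n$ is again \emph{exactly} of the form treated in Corollary \ref{sumtight}, now with the bounded function $h$ in place of $f$; therefore $\{D^n\}$ is tight on $D$. Finite-dimensional convergence to the deterministic, everywhere-continuous zero process together with tightness yields $D^n\Rightarrow0$ in $D$, and since the limit is a constant this is equivalent to $D^n\to0$ in probability, which is the assertion.

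The main obstacle is the conditioning step that makes the cross terms vanish: one must argue carefully that conditioning on the \emph{entire} path of $C^n$ leaves the marks $(a_i)$ independent with unchanged laws, so that each centered summand has conditional mean zero and distinct summands are conditionally uncorrelated. A secondary point is that if the $a_i$ are only mutually independent rather than i.i.d., the centering function $g$ (and hence $h$) acquires a dependence on $i$; this does not affect the argument, since only the uniform bound $|h|\le 2\|f\|_\infty$ is used, both in the variance estimate and in invoking the tightness criterion of Corollary \ref{sumtight}. Finally, one uses the standard facts that finite-dimensional convergence together with tightness implies weak convergence in $D$, and that weak convergence to a deterministic limit implies convergence in probability.
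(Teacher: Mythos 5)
Your proof is correct, but it takes a genuinely different route from the paper's. Writing $D^n:=V^n-\E[V^n\mid C^n]$, the paper first tries to reduce uniform convergence on $[0,T]$ to pointwise convergence by interchanging $\limsup_n$ with $\sup_t$, and then shows $\limsup_n|D^n(t)|=0$ for fixed $t$ through a subsequence argument combining tightness (Corollary \ref{sumtight}), Ces\`aro convergence, Kronecker's lemma, and Kolmogorov's two-series theorem applied to the weighted series $\sum_i i^{-1}\big(V^{n_i}(t)-\E[V^{n_i}\mid C^{n_i}](t)\big)$. You instead obtain the pointwise statement from a direct conditional second-moment bound: conditioning on the whole path of $C^n$ fixes $C^n(t)$ and the epochs $\tau_i^n$ while leaving the marks $(a_i)$ independent with unchanged laws, so the cross terms vanish and
\begin{equation*}
\E\big[D^n(t)^2\big]\le \frac{4\|f\|_\infty^2}{n^2}\,\E[C^n(t)]\le \frac{4\|f\|_\infty^2\,Mt}{n},
\end{equation*}
where the last step uses the defining property of the stochastic intensity together with the almost sure bound $n^{-1}\gamma_n\le M$; you then upgrade to convergence in $D$ via tightness of the centered process (the same appeal to Corollary \ref{sumtight} that the paper makes) plus the standard facts that tightness together with finite-dimensional convergence to the continuous zero process gives $D^n\Rightarrow0$, and that weak convergence to a deterministic limit is convergence in probability. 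Your route buys simplicity and an explicit $O(1/n)$ rate in $L^2$, and it sidesteps the two most delicate steps of the paper's argument: the $\limsup$/$\sup$ interchange (the inequality the paper actually derives, $\limsup_n\sup_t\ge\sup_t\limsup_n$, supports the reverse of the set inclusion it claims) and the two-series theorem, whose independence hypothesis is not transparent for the terms indexed by $i$. The caveats you flag are handled correctly: the marks remain conditionally independent with their original laws precisely because they are mutually independent and independent of $C^n$; and when the $a_i$ are not identically distributed the centering acquires a dependence on $i$, which is harmless since both your variance estimate and the tightness argument of Corollary \ref{sumtight} use only the uniform bound $|h_i|\le2\|f\|_\infty$ (note the paper itself applies Corollary \ref{sumtight} to the same $i$-dependent centered summands).
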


\begin{proof}
We need to show that $\varepsilon>0$,
\begin{equation*}
    \prob(\sup_{t\in[0,T]}|V^n(t)-\tilde{V}^n(t)|>\varepsilon)\to 0 
\end{equation*}
as $n\to\infty$. Observe that
\begin{equation*}
    \limsup_{n}\prob(\sup_{t\in[0,T]}|V^n(t)-\tilde{V}^n(t)|>\varepsilon)\leq\prob(\limsup_{n}\sup_{t\in[0,T]}|V^n(t)-\tilde{V}^n(t)|>\varepsilon)
\end{equation*}
and
\begin{equation*}
    \{\limsup_{n}\sup_{t\in[0,T]}|V^n(t)-\tilde{V}^n(t)|>\varepsilon\}\subset\{\sup_{t\in[0,T]}\limsup_{n}|V^n(t)-\tilde{V}^n(t)|>\varepsilon\}.
\end{equation*}
Indeed, writing $V^n(t)-\tilde{V}^n(t)=h_n(t)$, we obtain:
\begin{equation*}
    \limsup_{n}\sup_{t\in[0,T]}h_n(t)=\lim_{n\to\infty}\sup_{m\geq n}\sup_{t\in[0,T]}h_m(t)\geq\lim_{n\to\infty}\sup_{m\geq n}h_m(t)=\limsup_{n}h_n(t),
\end{equation*}
for any $t\in[0,T]$. Taking the supremum over $t\in[0,T]$ in the right hand side, we obtain the relation above.
It suffices then to show that for any $t$,
\begin{equation}\label{VnEVn0}
    \limsup_{n}|V^n(t)-\tilde{V}^n(t)|=0.
\end{equation}
Note that:
\begin{equation*}
    V^n(t)-\tilde{V}^n(t)=\frac{1}{n}\sum_{i=1}^{C^n(t)}\chi_i^n(t)
\end{equation*} 
with $\chi_i^n(t):=f(\tau_i^n,t,a_i)-\E[f(\tau_i^n,t,a_i)|\tau_i^n]$.
By Corollary \ref{sumtight}, $(V^n-\tilde{V}^n(t))_{n\in\N}$ is tight on $D$ and $\limsup_n|V^n(t)-\tilde{V}^n(t)|$ converges in probability for any fixed $t$.
Let $\mathcal{V}$ be a version of the limit of a convergent subsequence $(V^{n_k}-\tilde{V}^{n_k})_{n_k}$. Then from Cèsaro mean convergence theorem (Corollary $1.2$, \cite{Toeplitz})
\begin{equation*}
    \lim_{k\to\infty}\frac{1}{k}\sum_{i=1}^{k}V^{n_i}(t)-\tilde{V}^{n_i}(t)=\mathcal{V}(t)
\end{equation*}
for any $t$. From Kronecker Lemma (Corollary $1.3$, \cite{Toeplitz}) we have that if the series
\begin{equation}\label{sumVnk}
    \sum_{i=1}^{k}\frac{V^{n_i}(t)-\tilde{V}^{n_i}(t)}{i}
\end{equation}
converges as $k\to\infty$, then $\mathcal{V}(t)=0$.  
Since $(\bar{C}^n)_{n\in\N}$ is tight and $\E[\chi_i^n(t)]=0$ for any $i,n$ and $t$, 
\begin{equation*}
    \sum_{i=1}^{k}\E\left[\frac{V^{n_i}(t)-\tilde{V}^{n_i}(t)}{i}\right]=0.
\end{equation*}
Let $M$ be an upper bound of $|f|$, then $|\chi_i^n(t)|\leq2M$ and
\begin{equation}\label{VarVnEVn}
    \begin{aligned}
    \sum_{i=1}^{k}\E\left[\frac{(V^{n_i}(t)-\tilde{V}^{n_i}(t))^2}{i^2}\right]\leq\sum_{i=1}^k\frac{4M^2\E\left[\left(\frac{C^{n_i}(t)}{n_{i}}\right)^2\right]}{i^2}\\
    =4M^2\sum_{i=1}^k\frac{\E\left[\big(\bar{C}^{n_i}(t)\big)^2\right]}{i^2}.
    \end{aligned}
\end{equation}
Since $\{\bar{C}^{n_i}\}_{i\geq1}$ is a convergent subsequence, the series in equation \eqref{VarVnEVn} converge. Using Kolmogorov's two-series theorem (see \cite[p~386]{ShiryayevProb}) it follows that the series in equation \eqref{sumVnk} converges almost surely. Since $\mathcal{V}(t)=0$, equation \eqref{VnEVn0} holds, and the result follows.
\end{proof}

\subsection{Functional Law of Large Numbers}\label{secFLLN}

In this subsection, we prove a law of large numbers for our rumor model.

\

\noindent {\bf{Proof of Theorem \ref{FLLN}}}

\

The proof of the functional law of large numbers is divided into $2$ parts. First, we show that the proportions corresponding to the initial configuration converge almost surely to a deterministic function. Then we show that the proportions corresponding to the rumor model converge in probability and we identify that limit.

In order to show the first part of the proof, consider the following proportions related to individuals initially non-ignorant
\begin{gather*}
    \bar{W}^n_0(t):=\frac{1}{n}\sum_{i=1}^{n\bar{W}^n(0)}\1(t<\eta_i^0),\quad 
    \bar{Y}_{0}^n(t):=\frac{1}{n}\sum_{j=1}^{n\bar{W}^n(0)}\1(\eta_j^0\leq t,\U_j^0>G(\eta_i^0)), \\ \bar{Z}_{0}^n(t):=\frac{1}{n}\sum_{j=1}^{n\bar{W}^n(0)}\1(\eta^0_j\leq t,\U_j^0\leq G(\eta_i^0)).
\end{gather*}

 \begin{lemma}
 As $n\to\infty$
 \begin{equation*}
     (\bar{W}_{0}^n,\bar{Y}_{0}^n,\bar{Z}_{0}^n)\to(\bar{W}_{0},\bar{Y}_{0},\bar{Z}_{0})
 \end{equation*}
 in probability, where
 
    \begin{gather*}
        \bar{W}_{0}(t)=\bar{W}(0)F^c(t), \quad \bar{Y}_{0}(t)=\bar{W}(0)(F(t)-\beta(t)),\quad
        \bar{Z}_{0}(t)=\bar{W}(0)\beta(t)
    \end{gather*}
 \end{lemma}
\begin{proof}
     We prove it for $\bar{Y}^n_0$ as the other cases are similar. Let
     \begin{equation*}
         \breve{Y}^n_0(t):=\frac{1}{n}\sum_{j=1}^{\lfloor n\bar{W}(0)\rfloor}\1\big(\eta_j^0\leq t,\U_{j}^{0}>G(\eta_j^0)\big),\quad t\geq0.
    \end{equation*}

    Then
    \begin{equation*}
    \left|\bar{Y}^n_0(t)-\breve{Y}^n_0(t)\right|\leq\frac{1}{n}\sum_{j=n\bar{W}^n(0)\bigwedge\lfloor n\bar{W}(0)\rfloor}^{n\bar{W}^n(0)\bigvee\lfloor n\bar{W}(0)\rfloor}\1\big(\eta_j^0\leq t,\U_{j}^{0}>G(\eta_j^0)\big),\quad t\geq0.
\end{equation*}

Since $\{\eta_j^0:j\geq1\}$ and $\{\U_j^0:j\geq1\}$ are mutually independent i.i.d. sequences,
\begin{equation*}
    \E\Bigg[\frac{1}{n}\sum_{j=n\bar{W}^n(0)\bigwedge\lfloor n\bar{W}(0)\rfloor}^{n\bar{W}^n(0)\bigvee\lfloor n\bar{W}(0)\rfloor}\1\big(\eta_j^0\leq t,\U_{j}^{0}>G(\eta_j^0)\big)\Bigg]\leq \big(F(t)-\beta(t)\big)|\bar{W}^n(0)-\bar{W}(0)|,
\end{equation*}
which by Condition I converges in probability to zero as $n$ goes to infinity. From a consequence of Donsker's Theorem for the empirical process (Theorem $14.3$ in \cite{Bill}) we have, for any $t\geq0$, that
\begin{equation*}
    \bar{Y}^n_0(t)\to\bar{Y}_0(t)=\bar{W}(0)\big(F(t)-\beta(t)\big)\quad\text{in D as }n\to\infty.
\end{equation*}
\end{proof}

Now we investigate the asymptotic behavior of the processes where there is interaction between individuals, which is the second part of the proof. Define
\begin{gather*}
    \bar{W}^n_1(t):=\frac{1}{n}\sum_{i=1}^{n\bar{A}^n(t)}\1(t<\tau_i^n+\eta_i),\quad
    \bar{Y}_{1}^n(t):=\frac{1}{n}\sum_{i=1}^{n\bar{A}^n(t)}\1(\tau_i^n+\eta_i\leq t,\U_i>G(\eta_i)),\\
    \bar{Z}_1^n(t):=\frac{1}{n}\sum_{i=1}^{n\bar{A}^n(t)}\1(\tau_i^n+\eta_i\leq t,\U_i\leq G(\eta_i))
\end{gather*}
and
\begin{align*}
    \tilde{W}^n_1(t)&:=\frac{1}{n}\sum_{i=1}^{n\bar{A}^n(t)}F^c(t-\tau_i^n)=\int_0^tF^c(t-s)\bar{A}^n(ds),\\
    \tilde{Y}_{1}^n(t)&:=\frac{1}{n}\sum_{i=1}^{n\bar{A}^n(t)}(F(t-\tau_i^n)-\beta(t-\tau_i^n))=\int_{0}^{t}(F(t-s)-\beta(t-s))\bar{A}^n(ds),\\
    \tilde{Z}_1^n(t)&:=\frac{1}{n}\sum_{i=1}^{n\bar{A}^n(t)}\beta(t-\tau_i^n)=\int_0^t\beta(t-s)\bar{A}^n(ds).
\end{align*}

\begin{lemma}\label{supVnconv} We have
\begin{enumerate}
    \item $\displaystyle\sup_{t\in[0,T]}\{|\bar{W}^n_1(t)-\tilde{W}^n_1(t)|,|\bar{Y}^n_1(t)-\tilde{Y}^n_1(t)|,|\bar{Z}^n_1(t)-\tilde{Z}^n_1(t)|\}\to0,$
\end{enumerate}
in probability as $n\to\infty$.
\end{lemma}

\begin{proof}
It is a direct consequence of Theorem  \ref{VnEVnzero}. 
\end{proof}

In the next result we characterize the limit for the processes above. From tightness of processes $A^n$ and $B^n$ (Corollary \ref{AnBnTight}) we will work with an almost surely convergent subsequence of $(\bar{A}^n,\bar{B}^n)$. Let $(\bar{A},\bar{B})$ be the limit along this convergent subsequence.

\begin{lemma}
    As $n\to\infty$ the following hold
\begin{gather*}
    \tilde{W}^n_1(t)\to\int_0^tF^c(t-s)\,d\bar{A}(s),\quad \tilde{Y}^n_1(t)\to\int_0^t(F(t-s)-\beta(t-s))\,d\bar{A}(s),\\
    \tilde{Z}^n_1(t)\to\int_0^t\beta(t-s)\,d\bar{A}(s),
\end{gather*}
in probability.
\end{lemma}
\begin{proof}
    It follows by Lemma $4.4$ in \cite{forien2021epidemic}
\end{proof}

In other words we have shown that for a convergent subsequence $(\bar{A}^n,\bar{B}^n)$ there exist deterministic processes $\tilde{W},\tilde{Y}$ and $\tilde{Z}$ in $D$ such that $(\bar{W}^n,\bar{Y}^n,\bar{Z}^n)$ converges to $(\tilde{W},\tilde{Y},\tilde{Z})$ in probability as $n\to\infty$. To finish the proof of Theorem \ref{FLLN} we state the following lemma.

\begin{lemma}\label{AnBn}
As $n\to\infty$, 
\begin{equation*}
    \bar{A}^n(t)-\lambda\int_0^t\bar{X}^n(s)\bar{Y}^n(s)ds\to0
\quad\text{and}\quad
    \bar{B}^n(t)-\lambda\int_0^t\bar{X}^n(s)\bar{Z}^n(s)ds\to0,
\end{equation*}
in probability.
\end{lemma}

\begin{proof}
    By item $(\alpha)$ of Integration Theorem for stochastic intensities (Theorem T8 in \cite{Bremaud1981}),
    \begin{equation*}
        \bar{A}^n(t)-\lambda\int_0^t\bar{X}^n(s)\bar{Y}^n(s)ds\quad\text{and}\quad\bar{B}^n(t)-\lambda\int_0^t\bar{Y}^n(s)\bar{Z}^n(s)ds
    \end{equation*}
    are $\F_t^n$-local martingales. Also, their quadratic variations are, respectively, $A^n(t)/n^2$ and $B^n(t)/n^2$. Applying Theorem $1.4$ (Chapter $7$) in \cite{Kurtz}, the proof is complete.
\end{proof} 

Since $\bar{X}^n=1-\bar{W}^n-\bar{Y}^n-\bar{Z}^n$, we invoke Lemma \ref{AnBn} to conclude that $(\tilde{W},\tilde{Y},\tilde{Z})=(\bar{W},\bar{Y},\bar{Z})$ where $\bar{W},\bar{Y}$ and $\bar{Z}$ are  defined in equations \eqref{Wbar},\eqref{Ybar} and \eqref{Zbar}, respectively. This finishes the proof of the Functional Law of Large Numbers.

\subsection{Functional Central Limit Theorem}\label{secFCLT}

This subsection is entirely devoted to the proof of the functional central limit theorem for our rumor model.

To prove Theorem \ref{FCLT} we rewrite the following integral equations of the diffusion-scaled processes:
\begin{align*}
    \hat{W}^n(t)&= \hat{W}^n(0)F^c(t)+\hat{W}_0^n(t)+\hat{W}_1^n(t)&&\\
    &+\lambda\int_0^tF^c(t-s)(\hat{X}^n(s)\bar{Y}^n(s)+\bar{X}(s)\hat{Y}^n(s))ds ,&&\\
    \hat{Y}^n(t)&=\hat{W}^n(0)\big(F(t)-\beta(t)\big)+\hat{Y}^n(0)+\hat{Y}_{0}^n(t)+\hat{Y}_1^n(t)-\hat{B}^n(t)&&\\
    &+\lambda\int_0^t\big(F(t-s)-\beta(t-s)\big)(\hat{X}^n(s)\bar{Y}^n(s)+\bar{X}(s)\hat{Y}^n(s))ds,&&\\
    \hat{Z}^n(t)&=\hat{W}^n(0)\beta(t)+\hat{Z}^n(0)+\hat{Z}_{0}^n(t)+\hat{Z}_1^n(t)+\hat{B}^n(t)&&\\
    &+\lambda\int_0^t\beta(t-s)(\hat{X}^n(s)\bar{Y}^n(s)+\bar{X}(s)\hat{Y}^n(s))ds,&&
\end{align*}
where
\begin{align*}
    \hat{W}^n_0(t)=&\frac{1}{\sqrt{n}}\sum_{i=1}^{W^n(0)}\left(\1(t<\eta_i^0)-F^c(t)\right),&&\\
    \hat{W}^n_1(t)=&\sqrt{n}\left(\frac{1}{n}\sum_{i=1}^{A^n(t)}\1(t<\tau_i^n+\eta_i)-\lambda\int_0^tF^c(t-s)\bar{X}^n(s)\bar{Y}^n(s)ds\right)&&
\end{align*}
are stochastic processes related to aware individuals,
\begin{flalign*}
    \hat{Y}^n_{0}(t)=&\frac{1}{\sqrt{n}}\sum_{j=1}^{W^n(0)}\Big(\1(\eta_i^0\leq t,\U_j^0\leq\beta)-\big(F(t)-\beta(t)\big)\Big),&&\\
    \hat{Y}^n_1(t)=&\sqrt{n}\Bigg(\frac{1}{n}\sum_{i=1}^{A^n(t)}\1\big(\tau_i^n+\eta_i\leq t,\U_i>G(\eta_i)\big)&&\\
    &-\lambda\int_0^t(F(t-s)-\beta(t-s))\bar{X}^n(s)\bar{Y}^n(s)ds\Bigg),&&\\
    \hat{B}^n(t)=&\sqrt{n}\left(\bar{B}^n(t)-\lambda\int_0^t\bar{Y}(s)\bar{Z}(s)ds\right),&&
\end{flalign*}
are stochastic processes related to spreader individuals, and
\begin{flalign*}
    \hat{Z}^n_{0}(t)=&\frac{1}{\sqrt{n}}\sum_{j=1}^{W^n(0)}\left(\1\big(\eta_j^0\leq t,\U_j^0\leq G(\eta_i^0)\big)-\beta(t)\right),&&\\
    \hat{Z}^n_1(t)=&\sqrt{n}\Bigg(\frac{1}{n}\sum_{i=1}^{A^n(t)}\1(\tau_i^n+\eta_i\leq t,\U_i>\beta)-\lambda\int_0^tF(t-s)\bar{X}^n(s)\bar{Y}^n(s)ds\Bigg)&&
\end{flalign*}
are stochastic processes related to contestant individuals. The terms subscripted with $0$ are related to the noises related to the initial conditions, while the terms subscripted with $1$ represent the local martingales capturing the stochastic fluctuations of the interaction and deliberation dynamics (see Lemma \ref{martingales}).

The proof of the functional central limit theorem for the rumor model is divided into four parts. We begin by showing weak convergence to a white noise for the stochastic processes related to the initial configuration. In the second part, we establish the local-martingale property for the remaining stochastic processes. 
In the third part we characterize the quadratic variation of these martingales and show weak convergence to a Gaussian process. 

Finally, using the results of parts one, two and three we obtain the main result by noticing that the system of stochastic equations for $(\hat{W}^n,\hat{Y}^n,\hat{Z}^n)$ introduced above may be seen as a map from the space of càdlàgs $D^4$ to $D^3$ and that this map is continuous, thereby obtaining weak convergence of the diffusion-scaled processes.

\ 

\noindent {\bf{Proof of Theorem \ref{FCLT}}:}

\ 

The first part is the convergence of the noise related to the initial quantities. 

\begin{lemma}\label{LemmaConv0} The following holds
\begin{equation*}
    (\hat{W}^n_{0},\hat{Y}^n_{0},\hat{Z}^n_{0})\Rightarrow(\hat{W}_{0},\hat{Y}_{0},\hat{Z}_{0})
\end{equation*}
as $n\to\infty$, where the limit process is a zero-mean Gaussian with the non-null covariances given in Table \ref{TableOfCovariances}.
\end{lemma}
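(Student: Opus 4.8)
The plan is to treat each of the five coordinates as an empirical process associated with one of the three \emph{mutually independent} groups of individuals present at time $0$: the initial passives (indexed by $(\eta_j^0,\theta_j,\zeta_j,\U_j^0)$), the initial spreaders (indexed by $\theta_i^0$), and the initial contestants (indexed by $\zeta_i^0$). Joint weak convergence in $D([0,T],\R^5)$ then follows from a multivariate functional central limit theorem for such sums, and the covariance structure is read off as the limit of the prelimit covariances.

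First I would remove the randomness in the upper summation limits, replacing $W^n(0),Y^n(0),Z^n(0)$ by the deterministic counterparts $\lfloor n\bar W(0)\rfloor,\lfloor n\bar Y(0)\rfloor,\lfloor n\bar Z(0)\rfloor$. By Condition II the number of excess summands is $|W^n(0)-\lfloor n\bar W(0)\rfloor|\leq\sqrt n\,|\hat W^n(0)|+O(1)=O_P(\sqrt n)$, and it is independent of the (centered, uniformly bounded) summands. Hence the excess is itself an empirical process built from $O_P(\sqrt n)$ terms, whose uniform norm is of order $n^{-1/2}(\sqrt n)^{1/2}=n^{-1/4}\to 0$; alternatively one may invoke an Anscombe-type argument using that $\bar W^n(0)\to\bar W(0)$ and that the underlying empirical processes are asymptotically equicontinuous. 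Either way each coordinate is unchanged up to $o_P(1)$ in the uniform norm, which reduces the statement to empirical processes with a deterministic number of i.i.d. terms.

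With deterministic bounds, each coordinate has the form $n^{-1/2}\sum_{j=1}^{\lfloor n\bar c\rfloor}(f(\cdot\,;\xi_j)-\E f(\cdot\,;\xi_j))$ for a uniformly bounded càdlàg indicator process $t\mapsto f(t;\xi_j)$ --- a survival (half-line) indicator for $\hat W^n_0,\hat Y^n_{0,2},\hat Z^n_{0,2}$ and a window (interval) indicator for $\hat Y^n_{0,1},\hat Z^n_{0,1}$. Stacking the coordinates produces a single $\R^5$-valued i.i.d. sum, and I would establish weak convergence in the two standard steps. For the finite-dimensional distributions I would apply the multivariate Lindeberg--Feller theorem to the triangular array of bounded i.i.d. vectors together with the Cramér--Wold device; the limit is centered Gaussian because every summand is centered. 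For tightness I would argue coordinatewise: each indicator class (half-lines, respectively intervals) is a VC class and hence Donsker, so Billingsley's càdlàg moment criterion (Theorem $13.5$ in \cite{Bill}) applies after bounding $\E[(f(t+h;\xi)-f(t;\xi))^2]$ by the mass that $F_0$ and the conditional laws $G(\cdot\mid u),H(\cdot\mid u)$ assign to windows of width $h$; coordinatewise tightness yields joint tightness in $D([0,T],\R^5)$.

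Finally, the limiting covariance of two coordinates is the $n\to\infty$ limit of their prelimit covariance, namely $\bar c\,(\E[f(t;\xi)g(r;\xi)]-\E f(t;\xi)\,\E g(r;\xi))$, where $\bar c\in\{\bar W(0),\bar Y(0),\bar Z(0)\}$ is the fluid mass of the relevant group (Condition I) and $f,g$ are the two indicator processes. Cross-covariances between coordinates drawn from different groups vanish by independence, whereas those within the initial-passive group (such as $\Cov[\hat W_0(t),\hat Y_{0,1}(r)]$ and $\Cov[\hat W_0(t),\hat Z_{0,1}(r)]$) are nonzero and arise from the shared variables $(\eta^0_j,\theta_j,\zeta_j,\U^0_j)$; computing these conditional second moments against $F_0$, the conditional laws, and the side-choice $\1(\U^0\le\beta)$ reproduces the entries of Table \ref{TableOfCovariances}. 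The step I expect to be most delicate is the tightness of the window-indicator coordinates $\hat Y^n_{0,1}$ and $\hat Z^n_{0,1}$: these are not monotone in $t$, so the one-dimensional Donsker theorem for the empirical distribution function used in the proof of Theorem \ref{FLLN} does not apply directly and one must work over the VC class of intervals (or verify the increment bound by hand); a secondary subtlety is that $F_0,G_0,H_0$ may have atoms, so the limit lives in $D$ under the Skorokhod $J_1$ topology and the tightness criterion must be the càdlàg one.
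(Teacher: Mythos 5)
Your proposal is correct, and it reaches the lemma by a far more self-contained route than the paper, whose entire proof is one sentence: Lemma \ref{LemmaConv0} is declared to be ``a direct application of Donsker's Theorem for the empirical process (Theorem $14.3$ in \cite{Bill})''. Your plan is the same in spirit---all five coordinates are empirical processes of the time-zero randomness, and the limit is identified through prelimit covariances---but where the paper cites, you prove: Cram\'er--Wold plus Lindeberg--Feller for the finite-dimensional laws, and a VC-class/moment-criterion argument for tightness. In doing so you handle explicitly the three points at which the citation is not literally ``direct'': (i) the summation limits $W^n(0),Y^n(0),Z^n(0)$ are random, and your $O_P(n^{-1/4})$ reduction to deterministic limits (legitimate because the initial class sizes are assumed independent of the clock variables, and using Condition II) is precisely the step the paper performs in its FLLN argument via the auxiliary process $\breve{W}^n_0$ but omits here; (ii) the window indicators in $\hat{Y}^n_{0,1}$ and $\hat{Z}^n_{0,1}$ are not distribution-function indicators, so the one-dimensional empirical-process theorem does not apply coordinatewise---one needs either your VC/moment argument or the decomposition $\1(\eta^0_j\le t<\eta^0_j+\theta_j)=\1(\eta^0_j\le t)-\1(\eta^0_j+\theta_j\le t)$, which reduces to a multivariate Donsker theorem; (iii) the lemma asserts \emph{joint} convergence of five coordinates that are dependent (those built from the initial passives share $(\eta^0_j,\theta_j,\zeta_j,\U^0_j)$), which requires the multivariate formulation you give and is exactly what produces the nonzero cross-covariances $\Cov[\hat{W}_0(t),\hat{Y}_{0,1}(r)]$ and $\Cov[\hat{W}_0(t),\hat{Z}_{0,1}(r)]$ in Table \ref{TableOfCovariances}. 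The paper's approach buys brevity; yours buys a proof that actually covers the statement as written (including possibly atomic $F_0,G_0,H_0$, which you rightly flag as forcing the c\`adl\`ag tightness criterion or a quantile-transform argument), at the cost of length.
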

\begin{proof}
     It is a direct application of Donsker's Theorem for the empirical process (Theorem $14.3$ in\cite{Bill}).
\end{proof}

\begin{lemma}\label{martingales}
The processes $\hat{W}_1^n(t),\hat{Y}_1^n(t),\hat{Z}_1^n(t)$ and $\hat{B}^n(t)$ are $\F_t^n$-local martingales.
\end{lemma}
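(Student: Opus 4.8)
The plan is to realize each of the five processes as $\sqrt{n}$ times the difference between a counting-type process and the integral of its $\F^n_t$-stochastic intensity, and then to invoke the compensation principle recalled in Appendix \ref{StochasticIntensity}: a counting process minus the integral of its $\F^n_t$-stochastic intensity is an $\F^n_t$-local martingale. The cleanest case is $\hat{Y}_2^n$. Since $\hat{Y}_2^n(t)=\sqrt{n}\bigl(\bar{B}^n(t)-\int_0^t\alpha(s)\bar{Y}^n(s)\bar{Z}^n(s)\,ds\bigr)$ and $\bar{B}^n$ is, by construction, a counting process whose $\F^n_t$-stochastic intensity is $\alpha(t)\bar{Y}^n(t)\bar{Z}^n(t)$, the compensation principle applies directly and exhibits $\hat{Y}_2^n$ as a local martingale; the prefactor $\sqrt{n}$ is a constant and does not affect the martingale property.

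For the four remaining processes $\hat{W}_1^n,\hat{Y}_1^n,\hat{Z}_1^n,\hat{Z}_2^n$ the strategy is the same, with the counting-type processes and their stochastic intensities supplied row by row by Proposition \ref{StocInt} (Table \ref{ProcessStocInt}). Concretely, I would fix $t$, recognize the first summand of each of $\hat{W}_1^n(t),\hat{Y}_1^n(t),\hat{Z}_1^n(t)$ (respectively $\hat{Z}_2^n(t)$) as the process appearing in the left column of Table \ref{ProcessStocInt}, and identify the integral being subtracted as $\int_0^t$ of the corresponding stochastic intensity in the right column. Since Proposition \ref{StocInt} has already verified the defining identity $\E[\int_0^\infty C\,dN]=\E[\int_0^\infty C\,\gamma\,ds]$ for these pairs $(N,\gamma)$, the compensation principle then yields that each compensated process is an $\F^n_t$-local martingale, after multiplication by $\sqrt{n}$. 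To be safe about integrability I would localize along the stopping times $T_k:=\inf\{t\ge 0:A^n(t)\vee B^n(t)\ge k\}$, which exhaust $[0,\infty)$ because the intensities $n\lambda(t)\bar{X}^n(t)\bar{Y}^n(t)$ and $n\alpha(t)\bar{Y}^n(t)\bar{Z}^n(t)$ are bounded (Corollary \ref{AnBnTight}), so that the stopped sums have only finitely many terms and the compensators are integrable.

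The main obstacle is that, unlike $\bar{B}^n$, the counting-type processes behind $\hat{W}_1^n,\hat{Y}_1^n,\hat{Z}_1^n,\hat{Z}_2^n$ are not monotone in $t$: the indicators $\1(t<\tau_i^n+\eta_i)$, $\1(\tau_i^n+\eta_i\le t<\tau_i^n+\eta_i+\theta_i)$, and the like, can switch off as $t$ grows, so these are genuinely the diagonals of the two-parameter families treated in Proposition \ref{StocInt}, where the stochastic intensity is computed in the auxiliary variable $s$ with $t$ held fixed. The delicate point is therefore to transfer the intensity correspondence, established in $s$, into a local-martingale statement in $t$, and to check that the subtracted integrals are genuinely $\F^n_t$-adapted (indeed predictable) compensators. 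Here I would lean on two ingredients already in place: the progressive-measurability statement Theorem \ref{progressive}, which guarantees that the candidate compensators are $\F^n$-progressive; and the mutual independence of the mark families $\{(\eta_i,\theta_i)\}$, $\{(\eta_i,\zeta_i)\}$, $\{\U_i\}$ from $A^n$ and $B^n$, which is exactly what lets one replace, inside the conditional expectation, the unobserved marks by their laws $F$, $G(\cdot\mid\cdot)$, $H(\cdot\mid\cdot)$ and recover the deterministic kernels $F^c,\psi,\psi^\beta,H^c$ appearing in the intensities. Verifying the martingale identity cleanly under the count filtration $\F^n_t$ is the step I expect to require the most care.
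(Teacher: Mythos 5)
Your proposal is correct and takes essentially the same route as the paper: the paper's entire proof is the one-line combination of Proposition \ref{StocInt} with item \eqref{alpha} of Theorem \ref{Martintensity}, exactly the two ingredients you invoke, with $\hat{Y}_2^n$ indeed the cleanest case since $B^n$ is a genuine counting process. The diagonal subtlety you flag at the end --- that the intensities in Table \ref{ProcessStocInt} are computed in the variable $s$ with $t$ held fixed, while the local-martingale claim is in $t$ for processes whose indicators can switch off --- is a real issue, but the paper's proof does not address it either, so your attempt is no less complete than the published argument.
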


\begin{proof}
Fix $T>0$, let $M(ds,dr,du,dz)$ denote a Poisson random measure on $[0,T]\times[0,1]\times\R_+\times\R_+$ with mean measure $\nu(ds,dr,du,dz)=ds\times F(dr)\times du\times dz$. Let $\tilde{M}(ds,dr,du,dz)$ denote the associated compensated Poisson random measure. Thus, $\hat{Y}_{1}^{n}(t)$ can be written as
\begin{equation*}
    \sqrt{n}\int_{0}^{t}\int_{0}^{\infty}\int_{0}^{1}\int_{0}^{\infty}g_n(s,r,u,z)\tilde{M}(ds,dr,du,dz),
\end{equation*}
where $g_n(s,r,u,z)=\1(z<\lambda X^{n}(s)Y^{n}(s), r<t-s,u>G(r))$. Indeed,
\begin{multline*}
    \frac{1}{n}\sum_{i=1}^{A^{n}(t)}\1\big(\tau_i^n+\eta_i\leq t,\U_i>G(\eta_i)\big)=\\
    \int_{0}^{t}\int_{0}^{\infty}\int_{0}^{1}\int_{0}^{\infty}g_n(s,r,u,z)M(ds,dr,du,dz)
\end{multline*}
and
\begin{multline*}
    \int_{0}^{t}\int_{0}^{\infty}\int_{0}^{1}\int_{0}^{\infty}\1(z<\lambda X^{n}(s)Y^{n}(s), r<t-s,u>G(r))\nu(ds,dr,du,dz)\\
    =\int_{0}^{t}\int_{0}^{\infty}\Bigg(\int_{0}^{t-s}\Bigg(\int_{G(r)}^{1}du\Bigg)F(dr)\Bigg)\1\big(z<\lambda X^{n}(s)Y^{n}(s)\big)dsdz\\
    =\int_0^t\lambda\big(F(t-s)-\beta(t-s)\big)X^{n}(s)Y^{n}(s)ds.
\end{multline*}
Then the result follows directly from the Integration Theorem for intensity kernels ($C4$ Corollary) in \cite{Bremaud1981}. The cases $\hat{W}_1^n$ and $\hat{Z}_1^n$ follow analogously. Lemma \ref{AnBn} has already established the case $\hat{B}^{n}$.
\end{proof}

\begin{proposition}\label{ConvQVhat}
As $n\to\infty$, the vector of quadratic variations and covariations
    \begin{equation*}
        ([\hat{W}_1^n],[\hat{Y}_1^n],[\hat{Z}_1^n],[\hat{B}^n],[\hat{W}_1^{n},\hat{Y}_1^{n}],[\hat{W}_1^{n},\hat{Z}_1^{n}])
    \end{equation*}
    converges to
    \begin{equation*}
        ([\hat{W}_1],[\hat{Y}_1],[\hat{Z}_1],[\hat{B}],[\hat{W}_1,\hat{Y}_1],[\hat{W}_1,\hat{Z}_1])
    \end{equation*}
    in probability, where
    \begin{flalign*}
    &[\hat{W}_{1}](t)=\lambda\int_0^tF^c(t-s)\bar{X}(s)\bar{Y}(s)ds,&&\\
    &[\hat{Y}_{1}](t)=\lambda\int_0^t\big(F(t-s)-\beta(t-s)\big)\bar{X}(s)\bar{Y}(s)ds,&&\\
    &[\hat{Z}_{1}](t)=\int_0^t\beta(t-s)\bar{X}(s)\bar{Y}(s)ds,&&\\
    &[\hat{B}](t)=\lambda\int_0^t\bar{Y}(s)\bar{Z}(s)ds,&&\\
    &[\hat{W}_1,\hat{Y}_1](t)=-\lambda\int_0^t\big(F(t-s)-\beta(t-s)\big)\bar{X}(s)\bar{Y}(s)ds,&&\\
    &[\hat{W}_1,\hat{Z}_1](t)=-\lambda\int_0^t\beta(t-s)\bar{X}(s)\bar{Y}(s)ds.&&
    \end{flalign*}
\end{proposition}
\begin{proof}
    We prove the convergence of $[\hat{W}_1^n]$ and $[\hat{W}_1^n,\hat{Y}_1^n]$; the proof of the other cases being similar. Since the quadratic variation of a stochastic integral w.r.t. the Lebesgue measure is zero, we have
    \begin{equation*}
        [\hat{W}_1](t)=\lim_{n\to\infty}[\hat{W}_1^n](t)=\lim_{n\to\infty}\sum_{s\leq t}(\Delta\hat{W}^n_1(s))^2=\lim_{n\to\infty}\frac{1}{n}\sum_{i=1}^{A^{n}(t)}\1(t<\tau_i^n+\eta_i),
    \end{equation*}
    where $\Delta\hat{W}^n_1(s)=\hat{W}^n_1(s)-\hat{W}^n_1(s-)$ represents the size of the jump of $\hat{W}^n_1$ at time $s$. Applying Theorem \ref{FLLN}, the result follows. 

    We also compute:
    \begin{equation*}
        [\hat{W}_1,\hat{Y}_1](t)=\lim_{n\to\infty}[\hat{W}_1^n,\hat{Y}_1^n](t)=\lim_{n\to\infty}\sum_{s\leq t}\Delta\hat{W}^n_1(s)\Delta\hat{Y}^n_1(s).
    \end{equation*}
    Recall that
    \begin{align*}
        \Delta\hat{W}^n_1(s)=&\frac{1}{\sqrt{n}}\big(\1(s=\tau_{A^n(s)}^n)-\1(s=\tau_{A^n(s)}^n+\eta_{A^{n}(s)})\big),\mbox{ and}\\
        \Delta\hat{Y}^n_1(s)=&\frac{1}{\sqrt{n}}\1(s=\tau_{A^n(s)}^n+\eta_{A^{n}(s)},U_{A^{n}(s)}>G(\eta_{A^n(s)})),
    \end{align*}
    then $\Delta\hat{W}^n_1(s)\Delta\hat{Y}^n_1(s)=-n^{-1/2}\Delta\hat{Y}_1^{n}(s)$, therefore
    \begin{align*}
        [\hat{W}_1,\hat{Y}_1](t)&=-\lim_{n\to\infty}\frac{1}{n}\sum_{i=1}^{A^n(t)}\1(t<\tau_{i}^{n}+\eta_i,U_i>G(\eta_i))\\
        &=-\lambda\int_0^t\big(F(t-s)-\beta(t-s)\big)\bar{X}(s)\bar{Y}(s)ds.
    \end{align*} 
\end{proof}

\begin{lemma}\label{LemmaConv}As $n\to\infty$,
    \begin{equation*}
        (\hat{W}^n_1,\hat{Y}^n_1,\hat{Z}^n_1,\hat{B}^n)\Rightarrow(\hat{W}_1,\hat{Y}_1,\hat{Z}_1,\hat{B})
    \end{equation*}
    where $(\hat{W}_1,\hat{Y}_1,\hat{Z}_1,\hat{B})$ is the zero-mean Gaussian vector with covariances given in Table \ref{TableOfCovariances}.
\end{lemma}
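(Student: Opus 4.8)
The plan is to deduce Lemma \ref{LemmaConv} from a multidimensional martingale functional central limit theorem, using the two structural facts already established: Lemma \ref{martingales}, which identifies $\hat{W}_1^n,\hat{Y}_1^n,\hat{Y}_2^n,\hat{Z}_1^n,\hat{Z}_2^n$ as $\F_t^n$-local martingales (with compensators read off from Proposition \ref{StocInt}), and Proposition \ref{ConvQVhat}, which gives the limits of their quadratic variations. The only hypotheses of the martingale FCLT (see \cite{Kurtz}) that remain to be checked are (i) asymptotic negligibility of the jumps, and (ii) convergence of the \emph{full} matrix of mutual quadratic (co-)variations to a deterministic, continuous limit.

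First I would dispose of (i). Each of the five processes equals $n^{-1/2}$ times a difference of counting/indicator sums, so it is a pure-jump martingale all of whose jumps have magnitude exactly $n^{-1/2}$. Hence $\sup_{t\le T}|\Delta \hat{W}_1^n(t)|\le n^{-1/2}$, and likewise for the other four, so that $\E[\sup_{t\le T}|\Delta \hat{W}_1^n(t)|^2]\le n^{-1}\to 0$. This is the Lindeberg-type condition required to apply the theorem, and it simultaneously forces every weak limit to be continuous.

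For (ii), the diagonal entries are exactly the content of Proposition \ref{ConvQVhat}. The off-diagonal cross-variations I would obtain by repeating the jump-counting argument used there: since $[\hat{W}_1^n,\hat{Y}_1^n](t)=\sum_{s\le t}\Delta\hat{W}_1^n(s)\,\Delta\hat{Y}_1^n(s)$, only the common jump times contribute, namely the epochs $\tau_i^n$ of $A^n$ together with their marks $(\eta_i,\theta_i,\zeta_i,\U_i)$; the product of the two associated indicators collapses to a single indicator whose $n^{-1}$-scaled sum converges, by Theorem \ref{FLLN}, to the corresponding deterministic integral. In this way each cross-variation within the $A^n$-driven triple $(\hat{W}_1^n,\hat{Y}_1^n,\hat{Z}_1^n)$ and within the $B^n$-driven pair $(\hat{Y}_2^n,\hat{Z}_2^n)$ converges to the entry recorded in Table \ref{TableOfCovariances}. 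A key point is that the cross-variation between any $A^n$-driven and any $B^n$-driven martingale converges to zero, because $A^n$ and $B^n$ almost surely share no jump times and the corresponding product of jumps is identically $0$; this is precisely why no such covariance appears in the table.

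With (i) and (ii) established, the multidimensional martingale FCLT yields joint weak convergence of $(\hat{W}_1^n,\hat{Y}_1^n,\hat{Y}_2^n,\hat{Z}_1^n,\hat{Z}_2^n)$ in $D^5$ to a continuous, zero-mean Gaussian process whose covariance is determined by the limiting (co-)variation matrix; the explicit evaluation of that matrix into the nonzero entries of Table \ref{TableOfCovariances} is carried out separately in Lemma \ref{covariances}. I expect the main obstacle to be step (ii): carefully bookkeeping the cross-variation computations and, in particular, justifying that the limiting covariation of two of these martingales evaluated at distinct time arguments produces the asymmetric $r\wedge t$ versus $r\vee t$ structure of the table, and verifying rigorously the asymptotic orthogonality of the $A^n$- and $B^n$-driven families rather than merely their instantaneous jump-disjointness.
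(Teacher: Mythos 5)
Your proposal takes essentially the same route as the paper, whose entire proof of Lemma \ref{LemmaConv} consists of citing Proposition \ref{ConvQVhat} together with the martingale FCLT (Theorem 1.4, Chapter 7 of \cite{Kurtz}) and deferring the covariances to Lemma \ref{covariances}, exactly as you do. If anything, yours is more careful --- the jump-negligibility and cross-variation checks you perform are hypotheses the paper invokes silently --- and the obstacle you flag at the end is real but equally unaddressed by the paper: a limit produced by the martingale FCLT has independent increments, hence covariance depending only on $r\wedge t$, whereas Table \ref{TableOfCovariances} involves $r\vee t$ as well, which signals that $\hat{W}^n_1$ and its companions must really be handled as terminal values of auxiliary martingales indexed by a second time parameter rather than as martingales in $t$.
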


\begin{proof}
The convergence follows from Proposition \ref{ConvQVhat} and Theorem $1.4$ (Chapter $7$) in \cite{Kurtz}. The covariances between the limit processes are given in Lemma \ref{covariances}. 
\end{proof}

Using equations \eqref{Xhat}-\eqref{Zhat}, Lemma \ref{LemmaConv0} and Lemma \ref{LemmaConv}, we proved that there exist stochastic processes $C_X^n(t),C_Y^n(t)$ and $C_Z^n(t)$ which are tight such that
\begin{align*}
    \hat{X}^n(t)&= C_X^n(t)+\lambda\int_0^t(\hat{X}^n(s)\bar{Y}^n(s)+\bar{X}(s)\hat{Y}^n(s))ds&&\\
    \hat{Y}^n(t)&= C_Y^n(t)+\lambda\int_0^t(F(t-s)-\beta(t-s))(\hat{X}^n(s)\bar{Y}^n(s)+\bar{X}(s)\hat{Y}^n(s))ds&&\\
    \hat{Z}^n(t)&= C_Z^n(t)+\lambda\int_0^t\beta(t-s)(\hat{X}^n(s)\bar{Y}^n(s)+\bar{X}(s)\hat{Y}^n(s))ds.&&
\end{align*}

The next lemma is an immediate extension of Lemma $9.1$ in \cite{Pardoux}. We complete the proof of Theorem \ref{FCLT} by applying the convergence results established in Lemmas \ref{LemmaConv0} and \ref{LemmaConv} to Lemma \ref{Volterraintegral} below.

\begin{lemma}\label{Volterraintegral}
    Let $\Gamma:D^4\to D^3$ be the map $(x_1,x_2,x_3,x_4)\mapsto(\phi_1,\phi_2,\phi_3)$ where    \begin{equation*}
        \begin{array}{rcl}
        \phi_1(t)&=&x_1(t)+\lambda\int_0^t((\phi_1(s)x_4(s)+f_1(s)\phi_2(s))ds\\
        \phi_2(t)&=&x_2(t)+\lambda\int_0^t\big(F(t-s)-\beta(t-s)\big)(\phi_1(s)x_4(s)+f_1(s)\phi_2(s))ds\\
        \phi_3(t)&=&x_3(t)+\lambda\int_0^t\beta(t-s)(\phi_1(s)x_4(s)+f_1(s)\phi_2(s))ds,
        \end{array}
    \end{equation*}
    and $f_1, f_2$ are continuous functions. Then, there exists a unique solution $(\phi_1,\phi_2,\phi_3)\in D^3$ to the integral system and the mapping is continuous in the Skorohod topology. That is, if $(x_1^n,x_2^n,x_3^n,x_4^n)\to(x_1,x_2,x_3,x_4)$ in $D^4$, with $x_4\in C$ then $\Gamma(x_1^n,x_2^n,x_3^n,x_4^n)\to\Gamma(x_1,x_2,x_3,x_4)$ in $D^3$ as $n\to\infty$.
\end{lemma}

\begin{proof}
    The existence and uniqueness are guaranteed by Theorems $1.1$, $1.2$, $2.2$ and $2.3$ in \cite{miller1971nonlinear} when $x_1,x_2,x_3\in C$. The case $x_1,x_2,x_3\in D$ is an immediate extension of the proof of these four results using the metric of Skhorohod's topology.

    For the continuity of the map $\Gamma$, fix $T>0$ and assume that there exists a convergent sequence $((x_1^n,x_2^n,x_3^n,x_4^n):n\geq1)$ in $D([0,T],\R^5)$, 
    \begin{equation*}
        (x_1^n,x_2^n,x_3^n,x_4^n)\to(x_1,x_2,x_3,x_4),
    \end{equation*}
    as $n\to\infty$. Therefore exists a sequence of increasing maps $\psi^n:[0,T]\to[0,T]$ such that $\Vert Id-\psi^n\Vert_T\to0$ and $\Vert x_i^n-x_i\circ\psi^n\Vert_T\to0$ for $1\leq i\leq4$ when $n\to\infty$. 
    
    By Lebesgue's Theorem, every monotone function defined on an open interval is almost surely differentiable, so we may assume that every $\psi^n$ has a derivative $\dot{\psi}^n$ almost surely. That is, we assume $\Vert\dot{\psi}^n-1\Vert_T\to0$ as $n\to\infty$.

    Finally, let
    \begin{eqnarray*}
        (\phi_1^n,\phi_2^n,\phi_3^n)&:=&\Gamma(x_1^n,x_2^n,x_3^n,x_4^n):n\geq1,\\
        (\phi_1,\phi_2,\phi_3)&:=&\Gamma(x_1,x_2,x_3,x_4).
    \end{eqnarray*}
    Here we exhibit the idea for the first coordinate, and the other follow by the same fashion. We have
    \begin{align*}
        |\phi_1^n(t)-\phi_1(\psi^nt)|
        &\leq\Vert x_1^n-x_1\circ\psi^n\Vert_T\\
        &\begin{aligned}+\lambda\bigg|&\int_0^t(\phi_1^n(s)x_4^n(s)+f_1(s)\phi_2^n(s))ds\\
        &-\int_0^{\psi^nt}(\phi_1(s)x_4(s)+f_1(s)\phi_2(s))ds\bigg|\end{aligned}\\
        &=\Vert x_1^n-x_1\circ\psi^n\Vert_T\\
        &\begin{aligned}+\lambda\bigg|&\int_0^t(\phi_1^n(s)x_4^n(s)+f_1(s)\phi_2^n(s))ds\\
        &-\int_0^{t}(\phi_1(\psi^ns)x_4(\psi^ns)+f_1(\psi^ns)\phi_2(\psi^ns))\dot{\psi}^n(s)ds\bigg|
        \end{aligned}
    \end{align*}

    By adding and subtracting, we obtain:
    \begin{eqnarray*}
        &&\begin{aligned}
        \bigg|&\int_0^t(\phi_1^n(s)x_4^n(s)+f_1(s)\phi_2^n(s))ds\\
        &-\int_0^{t}(\phi_1(\psi^ns)x_4(\psi^ns)+f_1(\psi^ns)\phi_2(\psi^ns))\dot{\psi}^n(s)ds\bigg|
        \end{aligned}\\
        &&\begin{aligned}
        \leq\bigg|&\int_0^t(\phi_1^n(s)x_4^n(s)+f_1(s)\phi_2^n(s))ds\\
        &-\int_0^{t}(\phi_1(\psi^ns)x_4^n(s)+f_1(s)\phi_2(\psi^ns))ds\bigg|
        \end{aligned}
        \\
        &&\begin{aligned}+\bigg|&\int_0^t(\phi_1(\psi^ns)x_4^n(s)+f_1(s)\phi_2(\psi^ns))ds\\&-\int_0^t(\phi_1(\psi^ns)x_4(\psi^ns)+f_1(\psi^ns)\phi_2(\psi^ns))ds\bigg|
        \end{aligned}
        \\
        &&\begin{aligned}
        +\bigg|&\int_0^{t}(\phi_1(\psi^ns)x_4(\psi^ns)+f_1(\psi^ns)\phi_2(\psi^ns))ds\\
        &-\int_0^{t}(\phi_1(\psi^ns)x_4(\psi^ns)+f_1(\psi^ns)\phi_2(\psi^ns))\dot{\psi}^n(s)ds\bigg|
        \end{aligned}\\
        &&\leq\int_0^t|\phi_1^n(s)-\phi_1(\psi^ns)|\cdot|x_4^n(s)|ds+\int_0^t|f_1(s)|\cdot|\phi_2^n(s)-\phi_2(\psi^ns)|ds\\
        &&+\sup_{s\in[0,T]}|\phi_1(s)|\cdot T\cdot\Vert x_4^n-x_4\circ\psi^n\Vert_T+\sup_{s\in[0,T]}|\phi_2(s)|\cdot T\cdot\Vert f_1-f_1\circ\psi^n\Vert_T\\
        &&+\int_0^{t}|\phi_1(\psi^ns)|\cdot|x_4^n(s)|ds+\int_0^t|f_1(s)\phi_2(\psi^ns)|\cdot|1-\dot{\psi}^n(s)|ds
    \end{eqnarray*}

    That is, we may write:
    \begin{multline*}
        |\phi_1^n(t)-\phi_1(\psi^nt)|\\
        \leq\varepsilon_1^n(t)+\int_0^tM_{1}(|\phi_1^n(s)-\phi_1(\psi^ns)|+|\phi_2^n(s)-\phi_2(\psi^ns)|+|\phi_3^n(s)-\phi_3(\psi^ns)|)ds,
    \end{multline*}
    where $M_1$ is a nonnegative bounded constant and $\varepsilon_1^n(t)$ converges to the zero function in $D$ as $n$ goes to infinity. Repeating the procedure for $\phi_2$ and $\phi_3$ and summing those quantities, we obtain
    \begin{multline*}
        |\phi_1^n(t)-\phi_1(\psi^nt)|+|\phi_2^n(t)-\phi_2(\psi^nt)|+|\phi_3^n(t)-\phi_3(\psi^nt)|\\
        \leq\varepsilon^n(t)+M\int_0^t(|\phi_1^n(s)-\phi_1(\psi^ns)|+|\phi_2^n(s)-\phi_2(\psi^ns)|+|\phi_3^n(s)-\phi_3(\psi^ns)|)ds,
    \end{multline*}
    where $M$ is a nonnegative bounded constant and $\varepsilon^n(t)$ converges to the zero function in $D$ as $n$ goes to the infinity. Thus, Gronwall's lemma yields the result.

\end{proof}

\section*{Acknowledgements}
This study was financed in part by the Coordenação de Aperfeiçoamento de Pessoal de Nível Superior – Brasil (CAPES) – Finance Code $001$. Also, it was partially supported by S\~ao Paulo Research Foundation (FAPESP) under grants $\#2017/10555-0$ and $\#2022/08948-2$.

\section*{Ethical Approval}
Not applicable. This research is purely theoretical and does not involve human participants or animals.

\section*{Declaration of Competing Interest}
The authors declare that they have no known competing financial interests or personal relationships that could have appeared to influence the work reported in this paper.

\section*{Data Availability}
This manuscript has no associated data.


\bibliography{cas-refs}

\end{document}